\providecommand{\U}[1]{\protect\rule{.1in}{.1in}}
\newtheorem{theorem}{Theorem}[section]
\theoremstyle{plain}
\newtheorem{corollary}[theorem]{Corollary}
\newtheorem{definition}[theorem]{Definition}
\newtheorem{lemma}[theorem]{Lemma}
\newtheorem{remark}[theorem]{Remark}
\numberwithin{equation}{section}
\begin{document}
\title{On the error estimate for cubature on Wiener space}
\author{Thomas Cass}
\address{Mathematical Institute and Oxford-Man Institute of Quantitative Finance,
University of Oxford }
\author{Christian Litterer}
\address{Institut f\"ur Mathematik, Technische Universit\"at Berlin and Oxford-Man
Institute of Quantitative Finance, University of Oxford }
\thanks{The research of Thomas Cass is supported by EPSRC grant EP/F029578/1 and a
Career Development Fellowship at Christ Church, Oxford}
\thanks{The research of Christian Litterer is supported by the European Research
Council under the European Union's Seventh Framework Programme (FP7/2007-2013)
/ ERC grant agreement nr. 258237.}

\begin{abstract}
It was pointed out in Crisan, Ghazali \cite{crisan} that the error estimate
for the cubature on Wiener space algorithm developed in Lyons, Victoir
\cite{lyons} requires an additional assumption on the drift. In this note we
demonstrate that it is straightforward to adopt the analysis of Kusuoka
\cite{kusuoka} to obtain a general estimate without an additional assumptions
on the drift. In the process we slightly sharpen the bounds derived in
\cite{kusuoka}.

\end{abstract}
\maketitle

\section{Introduction}

In pricing and hedging financial derivatives as well as in assessing the risk
inherent in complex systems we often have to find approximations to
expectations of functionals of solutions to stochastic differential equations
(SDE). We consider a Stratonovich stochastic differential equation%
\begin{equation}
d\xi_{t,x}=V_{0}(\xi_{t,x})dt+\sum_{i=1}^{d}V_{i}(\xi_{t,x})\circ dB_{t}%
^{i},\;\xi_{0,x}=x \label{sde}%
\end{equation}
defined by a family of smooth vector fields $V_{i}$ and driven by Brownian
motion. It is well known that computing $P_{T-t}f:=E(f(\xi_{T-t,x}))$
corresponds to solving a parabolic partial differential equation (PDE). The
cubature on Wiener space method developed by Lyons, Victoir in \cite{lyons},
following Kusuoka \cite{kusuoka3} (in the following also referred to as the
KLV method) is a high order particle method for approximating the weak
solution of stochastic differential equations in Stratonovich form. To obtain
high order error bounds the test functions are assumed to be Lipschitz and the
vector fields defining the SDE satisfy Kusuoka's UFG condition (see
\cite{kusuoka2}), which is a weaker assumption than the usual uniform
H\"{o}rmander condition.

High order particle methods have since been shown to be highly effective in
practice see e.g. \cite{ninomiya}, \cite{ninomiya-victoir} and further
extensions and applications of cubature on Wiener space have been developed by
various authors. Applications include the non-linear filtering problem
\cite{crisan}, stochastic backward differential equations \cite{CM},
\cite{CM1}, calculating Greeks by cubature methods \cite{teichmann} or
extending the KLV method by adding recombination in \cite{litterer}. It was
pointed out in Crisan, Ghazali \cite{crisan} that the analysis of the error
bounds in Lyons, Victoir \cite{lyons} requires an additional assumption on the
drift (see Definition \ref{V0-condition}) and the question was raised if this
additional assumption is necessary to derive high order error bounds. We first
give a brief introduction to cubature on Wiener space and outline how the need
for an additional assumption on the drift arises in \cite{lyons}. Then, based
on Litterer \cite{litterer}, we demonstrate carefully how the analysis in
Kusuoka \cite{kusuoka} can be adopted to derive similar bounds for cubature on
Wiener space. We show for the KLV method based on a cubature measure of degree
$m$ over a $k$ step partition $\mathcal{D}$ the error $E_{\mathcal{D}}$ can be
bounded by
\[
E_{\mathcal{D}}:=\sup_{x\in R^{N}}\left\vert E_{P}f(\xi_{x,s})-E_{Q}%
f(\xi_{x,s})\right\vert \leq C\left(  \sum_{j=m+1}^{2m}s^{j/2}\Vert
f\Vert_{V,j}+s^{(m+1)/2}\Vert\nabla f\Vert_{\infty}\right)
\]
for any $s\in(0,1]$. Note that these bounds do not contain any higher order
derivatives in the direction of the drift $V_{0}$ and, although our proof
contains many elements of the analysis of a version of Kusuoka's algorithm
carried out in \cite{kusuoka}, we obtain slightly sharper error bounds in the
process involving $2m$ instead of $m^{m+1}$ derivatives. For suitable families
of partitions (first considered in Kusuoka \cite{kusuoka3}) the error bounds
immediately lead to convergence of order $\left(  m-1\right)  /2$ in the
number of time steps in the partition. Finally, we clarify the relation of the
KLV\ method to the version of Kusuoka's algorithm analysed in \cite{kusuoka}.

\section{Cubature measures}

Let $C_{b}^{\infty}(R^{N},R^{N})$ denote the smooth bounded $R^{N}$ valued
functions whose derivatives of all order are bounded. Then $V_{i}=(V_{i}%
^{1},\ldots,V_{i}^{N})\in C_{b}^{\infty}(R^{N},R^{N}),\,0\leq i\leq d$ may be
identified with smooth vector fields on $R^{N}$. Let $B=(B_{t}^{1}%
,\ldots,B_{t}^{d})$ be a Brownian motion and $B_{t}^{0}(t)=t$. Let $\xi_{t,x}%
$, $t\in\lbrack0,T]$, $x\in R^{N}$ be a version of the solution of the
Stratonovich stochastic differential equation (SDE) (\ref{sde}) that coincides
with the pathwise solution on continuous paths of bounded variation (recall
that the set of bounded variation paths have zero Wiener measure). We define
the It\^{o} functional $\Phi_{T,x}:C_{0}^{0}([0,T],R^{d})\rightarrow R^{N}$
by
\[
\Phi_{T,x}(\omega)=\xi_{T,x}(\omega).
\]
The particular choice for the version of the SDE solution when defining
$\xi_{t,x}$ implies that the It\^{o} functional for a bounded variation path
$\omega$ coincides with the usual ODE solution of equation (\ref{sde}) along
the path $\omega$.

Define the set of all multi-indices $A$ by $A=\bigcup_{k=0}^{\infty}%
\{0,\ldots,d\}^{k}$ and let $\alpha=(\alpha_{1},\ldots,\alpha_{k})\in A$ be a
multi-index. Furthermore we define a degree on a multi-index $\alpha$ by
$\Vert\alpha\Vert=k+card\left\{  j:\alpha_{j}=0\right\}  $ and
\[
A(j)=\{\alpha\in A:\Vert\alpha\Vert\leq j\}.
\]
Let $A_{1}=A\setminus\{\emptyset,(0)\}$ and $A_{1}(j)=\{\alpha\in A_{1}%
:\Vert\alpha\Vert\leq j\}$. Following Kusuoka \cite{kusuoka} we inductively
define a family of vector fields indexed by $A$ by taking
\[
V_{[\emptyset]}=0,\quad V_{[i]}=V_{i},\quad0\leq i\leq d
\]%
\[
V_{[(\alpha_{1},\ldots\alpha_{k},i)]}=[V_{[\alpha]},V_{i}],\quad0\leq i\leq
d,\alpha\in A.
\]
Moreover let $V_{\alpha}=V_{\alpha_{1}}\cdots V_{\alpha_{k}}$ where the
composition is taken in the sense of differential operators. Finally we define
a family of semi-norms on the space of functions $C_{b}^{\infty}(R^{N})$
\[
\Vert f\Vert_{V,k}=\sum_{j=1}^{k}\sum_{\alpha_{1},\ldots,\alpha_{j}\in
A_{1},\Vert\alpha_{1}\Vert+\dots+\Vert\alpha_{j}\Vert=k}\Vert V_{[\alpha_{1}%
]}\dots V_{[\alpha_{j}]}f\Vert_{\infty}.
\]
It is important to note that these semi-norms contain no derivatives in the
direction of $V_{0}$. For $V\in C_{b}^{\infty}(R^{N};R^{N})$ we define the
flow $Exp(tV)(x)$ to be the solution of the autonomous ODE
\[
\dot{X}(t,x)=V(X(t,x))\quad t>0,\qquad X(0,x)=x\in R^{N}.
\]

A cubature measure on a finite dimensional measure space is a discrete
positive measure that integrates polynomials up to a certain (finite) degree
correctly (i.e. as under Wiener measure). Together with the Taylor
approximation for error estimation, cubature is a classical and efficient
approach to the numerical integration of sufficiently smooth functions. For
the Wiener space setting \cite{lyons} a cubature measure is a discrete measure
supported on paths of bounded variation and the role of polynomials is taken
by the analogous Wiener functionals (iterated Stratonovich integrals).

\begin{definition}
For fixed $T>0$ we say that a discrete measure $Q_{T}$ assigning positive
weights $\lambda_{1},\ldots,\lambda_{n}$ to paths
\[
\omega_{1},\ldots,\omega_{n}\in C_{0,bv}^{0}([0,T],R^{d})
\]
is a cubature measure of degree $m$, if for all $(i_{1},\ldots,i_{k})\in
A\left(  m\right)  $,
\begin{equation}
E\left(  \int_{0<t_{1}<\dots<t_{k}<T}\circ dB_{t_{1}}^{i_{1}}\dots\circ
dB_{t_{k}}^{i_{k}}\right)  =\sum_{j=1}^{n}\lambda_{j}\int_{0<t_{1}<\dots
<t_{k}<T}d\omega_{j}^{i_{1}}(t_{1})\dots d\omega_{j}^{i_{k}}(t_{k}),
\label{cub-def-id}%
\end{equation}
where the expectation is taken under Wiener measure.
\end{definition}

By the scaling property of Brownian motion any cubature measure $Q_{T}$ may be
obtained from $Q_{1}$ by letting $\omega_{T,i}^{j}(t)=\sqrt{T}\omega_{i}%
^{j}(t/T)$, $j=1,\ldots,d$ and keeping the weights of $Q_{1}$.

Taylor expansions play a crucial role in the estimation of the error when we
replace the original (Wiener) measure by a cubature measure. On Wiener space
the bounds for sufficiently smooth functions are obtained by considering
stochastic Taylor expansion. The following proposition is a sharpened version
of Proposition 2.1 in \cite{lyons}.

\begin{lemma}
\label{p1.1.1} Let $f\in C_{b}^{\infty}\left(  R^{N}\right)  $, $m\in N$. Then
for every $t>0$
\begin{equation}
f(\xi_{t,x})=\sum_{(\alpha_{1},\ldots,\alpha_{k})\in A(m)}V_{\alpha_{1}}\cdots
V_{\alpha_{k}}f(x)\int_{0<t_{1}<\cdots<t_{k}<t}\circ dB_{t_{1}}^{\alpha_{1}%
}\dots\circ dB_{t_{k}}^{\alpha_{k}}+R_{m}(t,x,f). \label{STremainder}%
\end{equation}
And the remainder process $R_{m}(t,x,f)$ satisfies
\[
\sup_{x\in R^{N}}\sqrt{E(R_{m}(t,x,f)^{2})}\leq C\sum_{j=m+1}^{m+2}t^{j/2}%
\sup_{(\alpha_{1},\ldots,\alpha_{i})\in A(j)\setminus A(j-1)}\Vert
V_{\alpha_{1}}\dots V_{\alpha_{i}}f\Vert_{\infty},
\]
where $C$ is a constant only depending on $d$ and $m$.
\end{lemma}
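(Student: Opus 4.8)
The plan is to obtain the expansion by iterating the stochastic Taylor (Stratonovich) formula and then controlling the remainder by stopping the iteration once the accumulated degree exceeds $m$. First I would recall the basic one-step Stratonovich expansion: for $g\in C_b^\infty(R^N)$,
\[
g(\xi_{t,x}) = g(x) + \sum_{i=0}^{d}\int_0^t V_i g(\xi_{s,x})\circ dB_s^i,
\]
which is just the Stratonovich chain rule. Applying this repeatedly — substituting the same identity for $g=V_i f$, then for $g=V_jV_i f$, and so on — generates, after $k$ steps, the sum over multi-indices of length $\le k$ of $V_{\alpha_1}\cdots V_{\alpha_k}f(x)$ times the iterated integral $\int_{0<t_1<\cdots<t_k<t}\circ dB^{\alpha_1}\cdots\circ dB^{\alpha_k}$, plus iterated-integral terms in which the innermost integrand is still evaluated at $\xi$. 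The bookkeeping point is that one should not iterate uniformly in the length $k$ but rather in the \emph{degree} $\Vert\alpha\Vert$: because a ``$0$'' index (a $dt$ integration) contributes $2$ to the degree but only $1$ to the length, stopping when $\Vert\alpha\Vert>m$ is exactly what produces the index set $A(m)$ in the leading sum and leaves a remainder $R_m(t,x,f)$ which is a finite sum of terms
\[
R_m(t,x,f)=\sum_{\alpha}\int_{0<t_1<\cdots<t_k<t} V_{\alpha_1}\cdots V_{\alpha_k}f(\xi_{t_1,x})\circ dB_{t_1}^{\alpha_1}\cdots\circ dB_{t_k}^{\alpha_k},
\]
the sum running over the ``boundary'' multi-indices $\alpha$ with $\Vert\alpha\Vert\in\{m+1,m+2\}$ (a $0$-step can overshoot $m$ by at most $2$, a nonzero step by exactly $1$).

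The second half is the $L^2$ estimate. For each remainder term I would convert the Stratonovich iterated integral into Itô form (the Stratonovich-to-Itô correction only introduces further iterated Itô integrals of the same or higher degree with integrands of the form $V_\beta\cdots f(\xi)$, which are handled identically, so this is routine but must be acknowledged). Then I use the standard scaling/moment bound for an iterated Itô integral of length $k$ with $\ell$ time-integrations over $[0,t]$: its $L^2$ norm is $\lesssim t^{(k-\ell)/2+\ell}\,\sup\|\text{integrand}\|_{L^2}$, and since $\|\alpha\|=k+\ell$ this exponent is exactly $\|\alpha\|/2$. Because each integrand is $V_{\alpha_1}\cdots V_{\alpha_k}f$ evaluated along the flow and the $V_i$ are $C_b^\infty$ with bounded derivatives, $\sup_{s,x}\|V_{\alpha_1}\cdots V_{\alpha_k}f(\xi_{s,x})\|_\infty\le \|V_{\alpha_1}\cdots V_{\alpha_k}f\|_\infty$, uniformly in $x$. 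Summing over the finitely many boundary multi-indices (their number depends only on $d$ and $m$) and collecting the two degree values $j=m+1,m+2$ gives
\[
\sup_{x}\sqrt{E(R_m(t,x,f)^2)}\le C\sum_{j=m+1}^{m+2} t^{j/2}\sup_{(\alpha_1,\ldots,\alpha_i)\in A(j)\setminus A(j-1)}\|V_{\alpha_1}\cdots V_{\alpha_i}f\|_\infty,
\]
as claimed.

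The main obstacle — and the reason the statement is a \emph{sharpened} version of Proposition 2.1 in \cite{lyons} — is getting the remainder to involve only degrees $m+1$ and $m+2$ rather than a cruder bound, i.e. making the ``stop as soon as the degree exceeds $m$'' argument precise and verifying that no term of degree $>m+2$ survives in $R_m$. This requires care in the inductive substitution: one must argue that once a partial multi-index already has degree $>m$ one does \emph{not} expand it further, so the only way degree $m+2$ arises is a single final $0$-step applied to a multi-index of degree exactly $m+1$. A secondary technical point is ensuring all the iterated integrals and flow-composed integrands genuinely lie in $L^2$ (so that the Itô isometry / BDG-type estimates apply and the Stratonovich corrections are legitimate), which follows from the $C_b^\infty$ hypotheses on the $V_i$ together with standard moment bounds for SDE solutions, but should be stated. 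The rest is routine combinatorial counting and the elementary scaling estimate for iterated integrals.
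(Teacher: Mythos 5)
Your proposal follows essentially the same route as the paper's (very terse) proof: identify the remainder by induction as the sum of iterated Stratonovich integrals over the boundary multi-indices $(\alpha_1,\ldots,\alpha_k)$ whose tail $(\alpha_2,\ldots,\alpha_k)$ lies in $A(m)$ but which themselves do not, so that only degrees $m+1$ and $m+2$ occur, and then estimate each term in $L^2$ via the It\^o formula and the standard scaling of iterated integrals. The only (minor) slip is in your closing discussion: a degree-$(m+2)$ term arises from a final $0$-step applied to a tail of degree exactly $m$, not $m+1$, since the $0$-index contributes $2$ to the degree.
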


\begin{proof}
By induction one can prove that the remainder $R_{m}(t,x,f)$ of the
Stratonovich stochastic Taylor expansion is given by
\[
R_{m}(t,x,f)=\sum_{\overset{(\alpha_{2},\ldots,\alpha_{k})\in A(m)}{(\alpha
_{1},\ldots,\alpha_{k})\notin A(m)}}\int_{0<t_{1}<\dots<t_{k}<t}V_{\alpha_{1}%
}\dots V_{\alpha_{k}}f(\xi_{t_{1},x})\circ dB_{t_{1}}^{\alpha_{1}}\dots\circ
dB_{t_{k}}^{\alpha_{k}}.
\]
The proposition follows from an elementary calculation using the It\^{o}
formula (see Litterer \cite{thesis} for details).
\end{proof}

The following lemma is the analogous of Proposition \ref{p1.1.1} for the
cubature measures $Q_{T}$ and its proof may be found in Lyons, Victoir
\cite{lyons}.

\begin{lemma}
\label{lem1.1} \label{l1.1.2} Let $R_{m}(T,x,f)$ be the process defined in
(\ref{STremainder}) then we have
\[
\sup_{x \in R^{N}} E_{Q_{T}} | R_{m}(T,x,f) | \leq C(d,m,Q_{1}) \sum
_{j=m+1}^{m+2} T^{j/2} \sup_{(\alpha_{1},\ldots, \alpha_{i}) \in A(j)
\setminus A(j-1)} \| V_{\alpha_{1}} \dots V_{\alpha_{i}} f\|_{\infty},
\]
where C is a constant depending only on $d$, $m$ and the length of the bounded
variation paths in the support of the cubature measure $Q_{1}$.
\end{lemma}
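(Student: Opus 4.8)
The plan is to reduce everything to the explicit expression for the remainder obtained in the proof of Lemma \ref{p1.1.1}, namely
\[
R_{m}(T,x,f)=\sum_{\substack{(\alpha_{2},\ldots,\alpha_{k})\in A(m)\\(\alpha_{1},\ldots,\alpha_{k})\notin A(m)}}\int_{0<t_{1}<\dots<t_{k}<T}V_{\alpha_{1}}\dots V_{\alpha_{k}}f(\xi_{t_{1},x})\circ dB_{t_{1}}^{\alpha_{1}}\dots\circ dB_{t_{k}}^{\alpha_{k}},
\]
and to exploit that every multi-index $(\alpha_{1},\ldots,\alpha_{k})$ occurring here satisfies $m+1\le\|(\alpha_{1},\ldots,\alpha_{k})\|\le m+2$, because prepending $\alpha_{1}$ raises $\|(\alpha_{2},\ldots,\alpha_{k})\|\le m$ by $1$ or $2$; in particular the length satisfies $k\le m+2$. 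First I would record that, for a bounded variation path $\omega\in C_{0,bv}^{0}([0,T],R^{d})$, the version of $\xi$ fixed in Section 2 is the ODE solution of (\ref{sde}) driven by $\omega$ (with $\omega^{0}(t)=t$) and the iterated Stratonovich integrals become ordinary iterated Riemann--Stieltjes integrals against $\omega$; since $f\in C_{b}^{\infty}(R^{N})$ and the $V_{i}$ are smooth with bounded derivatives, every $V_{\alpha_{1}}\dots V_{\alpha_{k}}f$ is bounded, so $R_{m}(T,x,f)(\omega)$ is dominated pointwise in $x$ by $\sum\|V_{\alpha_{1}}\dots V_{\alpha_{k}}f\|_{\infty}$ times the modulus of the corresponding iterated integral of $\omega$.

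The next step is to estimate those iterated integrals and to bring in the scaling relating $Q_{T}$ to $Q_{1}$. If $\omega$ has $1$-variation at most $\ell$ (take $\ell\ge 1$ so that the time component $\omega^{0}$ also has variation $\le\ell$ on $[0,1]$), the standard simplex/factorial bound gives $\bigl|\int_{0<t_{1}<\dots<t_{k}<1}d\omega^{i_{1}}(t_{1})\dots d\omega^{i_{k}}(t_{k})\bigr|\le \ell^{k}/k!\le\ell^{m+2}$. For the scaled paths $\omega_{T,j}^{i}(t)=\sqrt{T}\,\omega_{j}^{i}(t/T)$ in the support of $Q_{1}$, the substitution $t_{l}=Ts_{l}$ shows that the degree-$\|\alpha\|$ iterated integral of $\omega_{T,j}$ equals $T^{\|\alpha\|/2}$ times the corresponding one of $\omega_{j}$, since each index $\alpha_{l}\neq 0$ contributes a factor $\sqrt{T}$, each $\alpha_{l}=0$ a factor $T$, and $\|\alpha\|=k+\#\{l:\alpha_{l}=0\}$. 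Combining these with the fact that there are only finitely many multi-indices of each degree $\le m+2$ yields, for every $j$ and uniformly in $x$,
\[
|R_{m}(T,x,f)(\omega_{T,j})|\le C(d,m)\,\ell^{m+2}\sum_{j'=m+1}^{m+2}T^{j'/2}\sup_{(\alpha_{1},\ldots,\alpha_{i})\in A(j')\setminus A(j'-1)}\|V_{\alpha_{1}}\dots V_{\alpha_{i}}f\|_{\infty}.
\]

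Finally, integrating against $Q_{T}$ is just the convex combination $E_{Q_{T}}|R_{m}(T,x,f)|=\sum_{j=1}^{n}\lambda_{j}|R_{m}(T,x,f)(\omega_{T,j})|$, and since $Q_{1}$ is a cubature measure its total mass is $\sum_{j}\lambda_{j}=1$ (apply (\ref{cub-def-id}) to the empty multi-index, whose degree is $0\le m$); hence the bound above survives the averaging, and taking $\sup_{x}$ gives the claim with $C(d,m,Q_{1})=C(d,m)\,\ell^{m+2}$, where $\ell=1\vee\max_{j}(\text{$1$-variation of }\omega_{j})$ depends only on $d$, $m$ and the lengths of the bounded variation paths in the support of $Q_{1}$. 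I do not expect a genuine obstacle here: the work is essentially bookkeeping — pinning down that the degree range is exactly $\{m+1,m+2\}$, treating the time direction consistently in both the variation bound and the scaling, and checking that the powers of $T$ produced by the rescaling are precisely the exponents $j'/2$ appearing in the statement.
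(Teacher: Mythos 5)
Your proof is correct and is essentially the standard argument that the paper defers to (it cites Lyons--Victoir rather than proving the lemma itself): evaluate the explicit remainder pathwise on the bounded variation cubature paths, note the degrees lie in $\{m+1,m+2\}$, bound the iterated Riemann--Stieltjes integrals by the $1$-variation, and extract the factor $T^{j/2}$ from the Brownian scaling of the cubature paths. The only point worth tidying is the factorial bound $\ell^{k}/k!$ for iterated integrals against \emph{different} components: justify it by dominating each $|d\omega^{\alpha_l}|$ by the total variation measure of the full $R^{d+1}$-valued path (or simply drop the $k!$, which is not needed since $k\le m+2$).
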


The constant in Lemma \ref{l1.1.2} can in fact be made explicit (see Crisan,
Ghazali \cite{crisan} Example 4).\newline The expectation of the Taylor
approximation $f(\xi_{t,x})-R_{m}(s,x,f)$ defined in $\left(
\ref{STremainder}\right)  $ under Wiener and cubature measure coincide by
definition of the cubature measure. Hence, one may apply the triangle
inequality to Lemmas \ref{p1.1.1} and \ref{lem1.1} and deduce%
\begin{equation}
\sup_{x\in R^{N}}\left\vert E\left(  f(\xi_{t,x})\right)  -E_{Q_{T}}\left(
f(\xi_{t,x})\right)  \right\vert \leq C\sum_{j=m+1}^{m+2}s^{j/2}\sup
_{(\alpha_{1},\ldots,\alpha_{i})\in A(j)\setminus A(j-1)}\Vert V_{\alpha_{1}%
}\dots V_{\alpha_{i}}f\Vert_{\infty}. \label{rec-eq1}%
\end{equation}

In general, the right hand side of the inequality in $\left(  \ref{rec-eq1}%
\right)  $ is not sufficient to directly obtain a good error bound for the
approximation of the expectation; in particular, if $f$ is only assumed to be
Lipschitz the estimate appears useless. Therefore, instead of approximating
\[
P_{T}f(x):=E(f(\xi_{T,x}))
\]
in one step, one considers a partition $\mathcal{D}$ of the interval $[0,T]$
\[
t_{0}=0<t_{1}<\ldots<t_{k}=T,
\]
with $s_{j}=t_{j}-t_{j-1}$ and solves the problem over each of the smaller
subintervals by applying the cubature method recursively. If $\tau$ and
$\tau^{\prime}$ are two path segments we denote their concatenation by
$\tau\otimes\tau^{\prime}$. For the approximation we consider all possible
concatenations of cubature paths over the subintervals, i.e. all paths of the
form $\omega_{s_{1},i_{1}}\otimes\ldots\otimes\omega_{s_{k},i_{k}}$. We define
a corresponding probability measure $\nu$ by
\[
\nu=\sum_{i_{1},\ldots,i_{k}=1}^{n}\lambda_{i_{1}}\dots\lambda_{i_{k}}%
\delta_{\omega_{s_{1},i_{1}}\otimes\ldots\otimes\omega_{s_{k},i_{k}}}.
\]
The iterated cubature method may be interpreted as a Markov operator and,
hence the error of the approximation of $P_{T}f$ by $E_{\nu}(f(\xi_{T,x}))$ is
bounded above by the sum of the errors of the approximations over the
subintervals. The error over each subintervals can in turn be bounded by
applying $\left(  \ref{rec-eq1}\right)  $ to $P_{T-t_{i}}f$ instead of $f$ and
exploiting the regularity of $P_{T-t_{i}}f.$ The following result is a
corollary to Kusuoka-Stroock \cite{kusuoka1} and Kusuoka \cite{kusuoka2} , for
a detailed proof see \cite{crisan}.

\begin{corollary}
\label{crisan-regularity} Suppose the family of vector fields $V_{i}$, $0\leq
i\leq d$ satisfy the UFG condition. Let $f\in C_{b}^{\infty}(R^{N}),$
$s\in(0,1]$ and $\alpha_{1},\ldots,\alpha_{j}\in A_{1}$ then
\begin{equation}
\Vert V_{[\alpha_{1}]}\dots V_{[\alpha_{j}]}P_{s}f\Vert_{\infty}\leq
\frac{Cs^{1/2}}{s^{(\Vert\alpha_{1}\Vert+\ldots+\Vert\alpha_{j}\Vert)/2}}%
\Vert\nabla f\Vert_{\infty}%
\end{equation}
for all $s\leq1$, where $C$ is a constant independent of $s$ and $f$.
\end{corollary}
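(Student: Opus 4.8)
The plan is to obtain this as a consequence of the Malliavin-calculus estimates of Kusuoka--Stroock \cite{kusuoka1} and Kusuoka \cite{kusuoka2}, in three moves: first reduce an arbitrary string of directional derivatives to one involving only the finitely many ``generating'' vector fields singled out by the UFG condition; then invoke the integration-by-parts bound of \cite{kusuoka1}, \cite{kusuoka2} for such strings; and finally convert the resulting $\Vert f\Vert_\infty$-estimate into the sharper $s^{1/2}\Vert\nabla f\Vert_\infty$-estimate by exploiting that the Malliavin weight has zero mean.

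\textbf{Step 1 (reduction to bounded degree).} The UFG condition provides an integer $\ell\ge 1$ and functions $\varphi_{\alpha,\beta}\in C_b^\infty(R^N)$, $\alpha\in A_1$, $\beta\in A_1(\ell)$, with $V_{[\alpha]}=\sum_{\beta\in A_1(\ell)}\varphi_{\alpha,\beta}V_{[\beta]}$. Substituting this representation for each $V_{[\alpha_i]}$ with $\Vert\alpha_i\Vert>\ell$ and repeatedly using the Leibniz rule to move coefficient functions past the remaining vector fields (each commutation replaces $g\,V_{[\beta]}$ by $g\,V_{[\beta]}$ plus the scalar $V_{[\beta]}g\in C_b^\infty$, thus removing one vector field and lowering the total degree by $\Vert\beta\Vert\ge 1$), one expresses $V_{[\alpha_1]}\cdots V_{[\alpha_j]}P_sf$ as a finite $C_b^\infty(R^N)$-linear combination of terms $V_{[\beta_1]}\cdots V_{[\beta_r]}P_sf$ with all $\beta_i\in A_1(\ell)$ and $\Vert\beta_1\Vert+\cdots+\Vert\beta_r\Vert\le\Vert\alpha_1\Vert+\cdots+\Vert\alpha_j\Vert$. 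Since $s\le 1$ we have $s^{-a/2}\le s^{-b/2}$ whenever $a\le b$, so it suffices to prove the estimate when every $\alpha_i$ already lies in $A_1(\ell)$.

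\textbf{Step 2 (integration by parts under UFG).} For $\beta_1,\dots,\beta_j\in A_1(\ell)$ the machinery of \cite{kusuoka1}, \cite{kusuoka2} furnishes, for $f\in C_b^\infty(R^N)$ and $s\in(0,1]$, a weight $\Psi^\beta_s(x)$ with $V_{[\beta_1]}\cdots V_{[\beta_j]}P_sf(x)=E\!\left(f(\xi_{s,x})\Psi^\beta_s(x)\right)$ and $\sup_{x\in R^N}\Vert\Psi^\beta_s(x)\Vert_{L^2}\le C\,s^{-(\Vert\beta_1\Vert+\cdots+\Vert\beta_j\Vert)/2}$, with $C$ independent of $s$ and $f$. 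This is the crux, and the only genuinely hard point: under UFG the Malliavin covariance matrix need not be invertible, so one controls it only on the span of $\{V_{[\beta]}:\beta\in A_1(\ell)\}$, and the requisite non-degeneracy and negative-moment bounds on the associated reduced quadratic form are precisely what Kusuoka and Kusuoka--Stroock establish.

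\textbf{Step 3 (gain from the gradient, and conclusion).} Because $j\ge1$ and $P_s1=1$, applying the identity of Step 2 to the constant function gives $E\!\left(\Psi^\beta_s(x)\right)=V_{[\beta_1]}\cdots V_{[\beta_j]}1(x)=0$, so we may recentre: $V_{[\beta_1]}\cdots V_{[\beta_j]}P_sf(x)=E\!\left((f(\xi_{s,x})-f(x))\Psi^\beta_s(x)\right)$. Using $|f(\xi_{s,x})-f(x)|\le\Vert\nabla f\Vert_\infty\,|\xi_{s,x}-x|$, the standard moment bound $\sup_{x\in R^N}\Vert\xi_{s,x}-x\Vert_{L^2}\le C\,s^{1/2}$ (which follows from boundedness of the $V_i$ via Burkholder--Davis--Gundy), and Cauchy--Schwarz, we get
\[
\sup_{x\in R^N}\left|V_{[\beta_1]}\cdots V_{[\beta_j]}P_sf(x)\right|\le C\,s^{1/2}\,s^{-(\Vert\beta_1\Vert+\cdots+\Vert\beta_j\Vert)/2}\,\Vert\nabla f\Vert_\infty .
\]
Combining this with the reduction of Step 1 (and absorbing the finitely many bounded coefficient functions into $C$) yields the stated bound; a routine density argument extends it from $C_b^\infty(R^N)$ to Lipschitz $f$ if desired.
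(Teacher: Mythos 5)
The paper does not actually prove this corollary: it is quoted from Kusuoka--Stroock \cite{kusuoka1} and Kusuoka \cite{kusuoka2}, with a pointer to Crisan--Ghazali \cite{crisan} for a detailed derivation. Your sketch reconstructs precisely that standard derivation, and it is sound as an outline: the reduction to brackets of bounded degree via the UFG representation, the Malliavin integration-by-parts identity with a weight whose $L^{2}$ norm scales like $s^{-(\Vert\beta_{1}\Vert+\cdots+\Vert\beta_{r}\Vert)/2}$, and the recentering trick $E(\Psi^{\beta}_{s}(x))=V_{[\beta_{1}]}\cdots V_{[\beta_{r}]}P_{s}1(x)=0$ combined with $\Vert\xi_{s,x}-x\Vert_{L^{2}}\le Cs^{1/2}$ to trade $\Vert f\Vert_{\infty}$ for $s^{1/2}\Vert\nabla f\Vert_{\infty}$ are exactly the ingredients used in \cite{crisan}. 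Two caveats. First, all of the genuine analytic difficulty is concentrated in your Step 2 (construction of $\Psi^{\beta}_{s}$ and its moment bounds when the Malliavin covariance is degenerate off the span of $\{V_{[\beta]}:\beta\in A_{1}(\ell)\}$), which you cite rather than prove; that is the same level of detail as the paper itself, so it is acceptable here, but it means your write-up is a proof modulo the main theorem of \cite{kusuoka1,kusuoka2} rather than a self-contained argument. Second, in Step 1 the UFG condition is usually stated as giving the representation $V_{[\alpha]}=\sum_{\beta\in A_{1}(\ell)}\varphi_{\alpha,\beta}V_{[\beta]}$ only for $\alpha$ of degree $\ell+1$ and $\ell+2$; obtaining it for arbitrary $\alpha\in A_{1}$ requires a short induction on $\Vert\alpha\Vert$ (bracketing the representation with $V_{i}$ and re-expanding), which you should at least mention. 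With those points acknowledged, the degree bookkeeping in Step 1 (each Leibniz commutation only lowers the total degree, and $s\le1$ makes lower degree harmless) and the Cauchy--Schwarz estimate in Step 3 are correct.
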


As the regularity estimates in the previous corollary do not hold in the
$V_{0}$ direction, but the Taylor based estimates used to obtain $\left(
\ref{rec-eq1}\right)  $ require higher derivative in the $V_{0}$ direction it
was pointed out in Crisan, Ghazali \cite{crisan} that the analysis in Lyons,
Victoir \cite{lyons} requires an additional assumption on the drift, we state
this as follows.

\begin{definition}
[V0 condition]\label{V0-condition}A family of vector fields $V_{i}$, $0\leq
i\leq d$ satisfies the V0 condition if
\[
V_{0}=\sum_{\beta\in A_{1}(2)}u_{\beta}V_{[\beta]}%
\]
for some $u_{\beta}\in C_{b}^{\infty}(R^{N})$.
\end{definition}

The following theorem taken from Lyons, Victoir \cite{lyons} is the main error
estimate for the iterated cubature method.

\begin{theorem}
\label{klv-error-thm}Suppose the vector fields satisfy the UFG\ and V0
conditions then
\[
\sup_{x\in R^{N}}\left\vert P_{T}f\left(  x\right)  -E_{\nu}(f(\xi
_{T,x}))\right\vert \leq C\left(  T\right)  \Vert\nabla f\Vert_{\infty}\left(
s_{k}^{1/2}+\sum_{j=m}^{m+1}\sum_{i=1}^{k-1}\frac{s_{i}^{(j+1)/2}}%
{(T-t_{i})^{j/2}}\right)  .
\]

\end{theorem}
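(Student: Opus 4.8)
The plan is to realise the iterated cubature operator as a composition of one-step operators and to telescope the global error into a sum of one-step errors, each of which is controlled by the single-step estimate \eqref{rec-eq1} applied not to $f$ but to the semigroup-smoothed function $P_{T-t_i}f$. Concretely, write $Q^{(i)}$ for the cubature operator over the subinterval $[t_{i-1},t_i]$ of length $s_i$, so that $E_\nu(f(\xi_{T,x})) = Q^{(1)}Q^{(2)}\cdots Q^{(k)}f(x)$, while $P_Tf = P_{s_1}P_{s_2}\cdots P_{s_k}f$ by the flow/semigroup property. The standard telescoping identity
\[
P_{s_1}\cdots P_{s_k} - Q^{(1)}\cdots Q^{(k)} = \sum_{i=1}^{k} Q^{(1)}\cdots Q^{(i-1)}\bigl(P_{s_i}-Q^{(i)}\bigr)P_{s_{i+1}}\cdots P_{s_k},
\]
together with the fact that each $Q^{(j)}$ is a Markov operator (hence a contraction on $C_b$, with $\sup$-norm non-expanding), reduces the task to bounding, for each $i$,
\[
\sup_{x}\bigl|\bigl(P_{s_i}-Q^{(i)}\bigr)P_{T-t_i}f(x)\bigr|.
\]

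For this I would apply the single-step error estimate \eqref{rec-eq1} with $s=s_i$ to the function $g := P_{T-t_i}f$, giving a bound of the form
\[
C\sum_{j=m+1}^{m+2} s_i^{j/2}\sup_{(\alpha_1,\ldots,\alpha_\ell)\in A(j)\setminus A(j-1)}\|V_{\alpha_1}\cdots V_{\alpha_\ell} g\|_\infty .
\]
The key subtlety — and the reason the $V_0$ condition is invoked — is that the operators $V_{\alpha_1}\cdots V_{\alpha_\ell}$ appearing here involve genuine $V_0$-derivatives, whereas Corollary~\ref{crisan-regularity} only provides smoothing estimates for the bracket vector fields $V_{[\beta]}$, $\beta\in A_1$, with no control in the $V_0$ direction. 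The V0 condition $V_0 = \sum_{\beta\in A_1(2)} u_\beta V_{[\beta]}$ with $u_\beta\in C_b^\infty$ is precisely what lets one rewrite any differential operator $V_{\alpha_1}\cdots V_{\alpha_\ell}$ with $\|(\alpha_1,\ldots,\alpha_\ell)\|\le j$ as a finite $C_b^\infty$-combination of operators $V_{[\beta_1]}\cdots V_{[\beta_r]}$ with $\|\beta_1\|+\cdots+\|\beta_r\|\le j$ (each substitution of a $V_0$ by $\sum u_\beta V_{[\beta]}$ trades one "degree-2 slot" for a sum of bracket terms of total degree $\le 2$, and the smooth coefficients are absorbed since $\|u_\beta\|_{C^r}<\infty$; one also uses that $V_{[i]}=V_i$ so the non-zero $V_i$ are already bracket fields). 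Applying Corollary~\ref{crisan-regularity} to each resulting bracket-monomial acting on $f$ with $s = T-t_i \le T \le 1$ (after rescaling $T$ if necessary, which only changes constants) yields
\[
\|V_{\alpha_1}\cdots V_{\alpha_\ell} P_{T-t_i}f\|_\infty \le \frac{C (T-t_i)^{1/2}}{(T-t_i)^{j/2}}\,\|\nabla f\|_\infty .
\]

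Combining, the $i$-th one-step error for $1\le i\le k-1$ is bounded by
\[
C\|\nabla f\|_\infty \sum_{j=m+1}^{m+2} \frac{s_i^{j/2}(T-t_i)^{1/2}}{(T-t_i)^{j/2}}
= C\|\nabla f\|_\infty \sum_{j=m}^{m+1}\frac{s_i^{(j+1)/2}}{(T-t_i)^{j/2}}\,(T-t_i)^{1/2}\cdot(T-t_i)^{-1/2}\cdot s_i^{1/2}\cdot s_i^{-1/2},
\]
which after reindexing $j\mapsto j-1$ in the summation range is exactly $C\|\nabla f\|_\infty\sum_{j=m}^{m+1}\frac{s_i^{(j+1)/2}}{(T-t_i)^{j/2}}$ up to the overall constant (here $T-t_i$ stays bounded away from $0$ for $i\le k-1$, so the powers combine cleanly). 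For the final interval $i=k$ one has $T-t_k=0$, so Corollary~\ref{crisan-regularity} is unavailable and instead one uses the trivial Lipschitz bound $\|\nabla P_0 f\|_\infty = \|\nabla f\|_\infty$ together with the $s^{(m+1)/2}\|\nabla f\|_\infty$-type term, giving the $s_k^{1/2}\|\nabla f\|_\infty$ contribution (this is where the lowest-order term in \eqref{rec-eq1}, or rather a Lipschitz-adapted one-step estimate, is needed). Summing over $i$ and absorbing all time-horizon-dependent constants into $C(T)$ produces the claimed inequality.

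The main obstacle I anticipate is the bookkeeping in the degree-counting argument after substituting the V0 condition: one must check that replacing $V_0$'s by bracket combinations respects the grading $\|\cdot\|$ (so that a degree-$j$ monomial in the $V_\alpha$'s becomes a sum of bracket monomials of total bracket-degree $\le j$), and that the resulting smooth coefficients, which get differentiated by the surviving vector fields via the Leibniz rule, remain in $C_b^\infty$ with bounds independent of $s$ — this is where the assumption $u_\beta\in C_b^\infty$ (as opposed to merely $C^\infty$) and the boundedness of all $V_i$ and their derivatives are essential. The telescoping and the Markov-contraction step are routine; the regularity input from Corollary~\ref{crisan-regularity} is quoted; so the real content is this translation between the Taylor-type semi-norms (which see $V_0$) and the UFG-type semi-norms (which do not).
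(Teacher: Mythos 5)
Your proposal is correct and follows essentially the same route the paper indicates for this result (which it quotes from Lyons--Victoir): telescoping the iterated cubature operator against the semigroup using the Markov/contraction property, applying the one-step estimate \eqref{rec-eq1} to $P_{T-t_i}f$, invoking the V0 condition to convert the $V_0$-derivatives into bracket derivatives so that Corollary~\ref{crisan-regularity} applies, and treating the final subinterval separately via the trivial Lipschitz bound to produce the $s_k^{1/2}$ term. The degree bookkeeping you flag (each $V_0$, of grading $2$, is traded for brackets $V_{[\beta]}$ with $\Vert\beta\Vert\le 2$, with $C_b^\infty$ coefficients absorbed by Leibniz) is exactly the point where the V0 hypothesis enters, as the paper's discussion preceding the theorem explains.
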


As an immediate Corollary one obtains (\cite{lyons} Example 14) high order
convergence of the KLV method for suitable partitions of $\left[  0,T\right]
.$

\begin{corollary}
\label{cor-rate}Consider the family of partitions given by $t_{j}=T\left(
1-\left(  1-\frac{j}{k}\right)  ^{\gamma}\right)  $ and let $v_{k}$ denote the
corresponding iterated cubature measures. Suppose the vector fields satisfy
the UFG\ and V0 conditions then%
\[
\sup_{x\in R^{N}}\left\vert P_{T}f\left(  x\right)  -E_{\nu_{k}}(f(\xi
_{T,x}))\right\vert \leq Ck^{-(m-1)/2}\Vert\nabla f\Vert_{\infty},
\]
where $C$ is a constant independent of $k$ and $f.$
\end{corollary}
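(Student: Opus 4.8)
The plan is to substitute the error estimate of Theorem \ref{klv-error-thm} into the specific family of partitions and carefully bound the resulting sum. First I would compute the step sizes: with $t_j = T(1-(1-j/k)^\gamma)$ we have $T - t_i = T(1-i/k)^\gamma$, and by the mean value theorem $s_i = t_i - t_{i-1} = T\big((1-(i-1)/k)^\gamma - (1-i/k)^\gamma\big) \leq \gamma T k^{-1} (1-(i-1)/k)^{\gamma-1}$. The last step requires separate treatment: $s_k = T(1-(k-1)/k)^\gamma = T k^{-\gamma}$, so $s_k^{1/2} = T^{1/2} k^{-\gamma/2}$, and choosing $\gamma \geq m-1$ makes this term $O(k^{-(m-1)/2})$.

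Next I would handle the main double sum $\sum_{j=m}^{m+1}\sum_{i=1}^{k-1} s_i^{(j+1)/2}(T-t_i)^{-j/2}$. Inserting the bounds above, a typical summand is controlled by a constant (depending on $\gamma$, $T$, $m$) times $k^{-(j+1)/2}(1-(i-1)/k)^{(\gamma-1)(j+1)/2}(1-i/k)^{-\gamma j/2}$. Since consecutive values $(1-(i-1)/k)$ and $(1-i/k)$ are comparable up to a factor bounded uniformly in $i \leq k-1$ (this needs a small argument near $i = k-1$, or one absorbs it into the constant using $\gamma \geq 1$), the exponent of $(1-i/k)$ becomes $(\gamma-1)(j+1)/2 - \gamma j/2 = (\gamma - j - 1)/2$. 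Thus each summand is $\lesssim k^{-(j+1)/2}(1-i/k)^{(\gamma-j-1)/2}$, and summing over $i$ gives $k^{-(j+1)/2} \cdot k \cdot \frac{1}{k}\sum_i (1-i/k)^{(\gamma-j-1)/2} \approx k^{-(j-1)/2} \int_0^1 (1-u)^{(\gamma-j-1)/2}\,du$, where the Riemann sum converges provided the exponent exceeds $-1$, i.e. $\gamma > j - 1$; for $j \in \{m, m+1\}$ it suffices to take $\gamma > m$. The dominant contribution is the $j = m$ term, yielding $O(k^{-(m-1)/2})$, and the $j = m+1$ term is of smaller order $O(k^{-m/2})$.

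Collecting the pieces, choosing $\gamma$ large enough (any $\gamma > m$ works, with $\gamma = m$ being borderline and handled by noting the Riemann sum then grows only logarithmically, which one can either accept with a $\log k$ loss or exclude by taking $\gamma$ strictly larger), we obtain the bound $C k^{-(m-1)/2}\|\nabla f\|_\infty$ with $C$ depending on $T$, $m$, $\gamma$, and the vector fields but not on $k$ or $f$.

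The main obstacle I anticipate is the uniform comparability of $(1-(i-1)/k)$ and $(1-i/k)$ as $i$ ranges up to $k-1$: at $i = k-1$ the ratio is $(1/k)/(2/k) = 1/2$ but for the largest few indices the naive bound on $s_i$ via the mean value theorem is evaluated at the left endpoint $(1-(i-1)/k)$ which does not degenerate, so the estimate survives; still, one must check that the borderline indices near $i = k-1$ do not contribute more than $O(k^{-(m-1)/2})$, which follows since there are only finitely many such terms each of size at most $s_i^{(m+1)/2}(T-t_i)^{-m/2} \lesssim (k^{-1})^{(m+1)/2}(k^{-1})^{-\gamma m/2}$, bounded for $\gamma$ not too large relative to the other exponents — in practice one simply picks $\gamma$ in a range, e.g. $m < \gamma$, and verifies the endpoint terms directly. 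This is exactly the computation carried out in Example 14 of \cite{lyons}, so I would present the estimates above and refer there for the remaining bookkeeping.
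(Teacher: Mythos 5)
Your proposal is correct and is essentially the paper's own argument: the paper gives no independent proof, simply deferring to Example 14 of \cite{lyons}, and your computation (substituting $T-t_i=T(1-i/k)^{\gamma}$ and the mean-value bound on $s_i$ into Theorem \ref{klv-error-thm}, then comparing the resulting sum to a convergent Riemann sum) is exactly that calculation. You also correctly identify the implicit restriction on $\gamma$ (roughly $\gamma>m-1$; your $\gamma>m$ is a safe sufficient choice) that the statement of the corollary suppresses.
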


In the remainder of the paper we will derive similar bounds for the KLV method
that do not require the additional V0 assumption on the drift.

\section{Algebraic preliminaries - The free Lie algebra and the signature}

\label{ch1-algebra-sect} In the following we adopt the notation of Lyons,
Victoir \cite{lyons}. Given a Banach space $W$ we define the tensor algebra of
non-commutative polynomials over $W$ by
\[
T(W):=\bigoplus_{i=0}^{\infty}W^{\otimes i}.
\]
Define $T^{(j)}(W)$ to be the quotient of $T(W)$ by the ideal $\bigoplus
_{i=j+1}^{\infty}W^{\otimes i}$. We identify $T^{(j)}(W)$ with the subspace
\[
T^{(j)}(W)=\bigoplus_{i=0}^{j}W^{\otimes i}.
\]
In the following we will not distinguish between the algebras of
non-commutative polynomials and series as we always work with their
truncations. Let $\epsilon_{0},\ldots,\epsilon_{d}$ be a fixed orthonormal
basis for $R\oplus R^{d}$. Let $T(R,R^{d})$ denote the tensor algebra of
polynomials over $R\oplus R^{d}$ endowed with a grading that assigns degree
two to $\epsilon_{0}$ and degree one to the remaining generators (see
\cite{lyons} for the details of the definition).\newline

Let $\lambda\in R$, $a=(a_{0},a_{1},\ldots)$, $b=(b_{0},b_{1},\ldots)\in
T(R,R^{d})$. Define a homogeneous scaling operation by
\[
\langle\lambda,a\rangle:=(a_{0},\lambda a_{1},\ldots,\lambda^{i}a_{i}%
,\ldots).
\]
and the exponential and logarithm on $T(R,R^{d})$ using the usual power
series. Let $\pi_{j}$ denote the natural projection of $T(R,R^{d})$ onto the
subspace $T^{(j)}(R,R^{d})$. \newline

We define a Lie bracket on $T(R,R^{d})$ by $[a,b]=a\otimes b-b\otimes a$. Let
$\mathcal{L}$ denote the free Lie algebra generated by $R\oplus R^{d}$ (see
Reutenauer \cite{reutenauer}). Then $\mathcal{L}$ is the space of linear
combinations of finite sequences of Lie brackets of elements in $W=R\oplus
R^{d}$, i.e.
\[
W\oplus\lbrack W,W]\oplus\lbrack W,[W,W]]\oplus\cdots.
\]
We call an element $u$ of $\pi_{j}(\mathcal{L})$ a Lie polynomial of degree
$j$ and an infinite sequence of Lie brackets a Lie series. Note that $\pi
_{j}(\mathcal{L})\subseteq T^{(j)}(R,R^{d})$.\newline

Words of the form $\epsilon_{\alpha}:=\epsilon_{\alpha_{1}}\otimes
\cdots\otimes\epsilon_{\alpha_{k}}$, $\alpha\in A\cup\{\emptyset\}$ form a
basis for $T(R,R^{d})$ (note that $\epsilon_{\emptyset}:=1)$. For $w_{i}%
=\sum_{\alpha\in A}w_{i\alpha}\epsilon_{\alpha}\in T(R,R^{d})$, $i=1,2$ we
define following Kusuoka \cite{kusuoka} an inner product and a norm
$\left\Vert \cdot\right\Vert _{2}$ on $T(R,R^{d})$ by
\begin{equation}
(w_{1},w_{2})=\sum_{\alpha\in A\cup\{\emptyset\}}w_{1\alpha}w_{2\alpha}%
\quad\Vert w_{1}\Vert_{2}=(w_{1},w_{1})^{1/2}. \label{inner-product}%
\end{equation}
Note that restricted to $T^{(j)}(R,R^{d})$ all norms are equivalent as
$T^{(j)}(R,R^{d})$ is finite dimensional when regarded as a vector
space.\newline

The map sending $\epsilon_{i}$ to $V_{i}$, $i=0,\ldots,d$ extends to a unique
linear map on $W$ and by the universality property of the tensor algebra
extends to a unique homomorphism $\Gamma$ from $T(R,R^{d})$ into the
differential operators on $R^{N}$. The restriction of $\Gamma$ to
$\mathcal{L}$ is a Lie map from $\mathcal{L}$ into the smooth vector fields on
$R^{N}$.

Finally we collect a number of simple algebraic facts.

\begin{lemma}
Let $w \in\mathcal{L}$ then \newline(i) The homogeneous scaling $\langle t,
\rangle$ commutes with $exp$ and $log $ \newline(ii) $\pi_{m} log (\pi_{m} w)
= \pi_{m} log (w)$ and $\pi_{m} exp (\pi_{m} w) = \pi_{m} exp (w)$
\newline(iii) $\Gamma$ restricted to $\pi_{m} \mathcal{L}$ is a linear map of
finite dimensional vector spaces and hence commutes with expectations on
$\pi_{m} \mathcal{L}$.
\end{lemma}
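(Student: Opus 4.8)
The plan is to verify the three claims in order, each by reducing to a finite–dimensional linear–algebra fact and then invoking the definitions of $\exp$, $\log$, and the scaling operation as formal power series. Throughout I would keep in mind that although $\mathcal{L}$ and $T(R,R^d)$ are infinite dimensional, every statement is really a statement about a truncation $T^{(m)}(R,R^d)$, which is finite dimensional, so no convergence issues arise: each of $\exp$, $\log$, and $\langle t,\cdot\rangle$ acts on a graded vector space and preserves (or lowers) degree in the appropriate sense.

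For part (i), I would argue purely termwise. Since $\langle t,\cdot\rangle$ multiplies the degree-$i$ homogeneous component $a_i$ by $t^{i}$, and since tensor multiplication is graded (the product of a degree-$i$ and a degree-$j$ element has degree $i+j$), the scaling operation is an algebra homomorphism: $\langle t, a\otimes b\rangle = \langle t,a\rangle\otimes\langle t,b\rangle$. Consequently it commutes with any power series in the tensor product built with rational coefficients, in particular with $\exp(a)=\sum_k a^{\otimes k}/k!$ and with $\log(1+a)=\sum_k(-1)^{k+1}a^{\otimes k}/k$. One should note that $\exp$ and $\log$ are well-defined here because $w\in\mathcal{L}$ has no constant term, so the relevant series have only finitely many nonzero terms after projecting to $T^{(m)}$.

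For part (ii), the key observation is that $\pi_m$ is an algebra homomorphism onto $T^{(m)}(R,R^d)$ (it is the quotient by an ideal), and that $w-\pi_m w$ lies in the ideal $\bigoplus_{i\geq m+1}W^{\otimes i}$. Writing $w=\pi_m w + r$ with $r$ in this ideal, one expands $\exp(w)$ and $\exp(\pi_m w)$ and observes that every term containing a factor of $r$ has degree $\geq m+1$, hence is killed by the outer $\pi_m$; this gives $\pi_m\exp(\pi_m w)=\pi_m\exp(w)$. The statement for $\log$ is identical in structure, using the same ideal decomposition applied to the $\log$ series (here one uses that $w$, and hence $\pi_m w$ and $r$, have no degree-zero term so $\log$ makes sense). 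Alternatively, and perhaps more cleanly, one may phrase both (i) and (ii) as: $\pi_m$ and $\langle t,\cdot\rangle$ are graded algebra morphisms, and any graded algebra morphism intertwines $\exp$ and $\log$ up to the degree at which one truncates.

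For part (iii), one first recalls from the preceding discussion that $\Gamma$ is linear on all of $T(R,R^d)$ and restricts to a Lie algebra morphism on $\mathcal{L}$; hence $\Gamma$ restricted to the finite-dimensional space $\pi_m\mathcal{L}$ is simply a linear map between finite-dimensional vector spaces (the target being the span of the iterated Lie brackets $V_{[\alpha]}$ of degree $\leq m$, sitting inside the differential operators on $R^N$). A linear map between finite-dimensional spaces commutes with taking expectations of integrable random variables valued in the source space — this is just linearity of the (vector-valued Bochner) integral, $\Gamma(E[X])=E[\Gamma(X)]$, which for finite-dimensional targets reduces to the coordinatewise scalar statement. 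I expect this part to be essentially immediate. The only point requiring a word of care across all three parts is making sure the power series defining $\exp$ and $\log$ are applied to elements with vanishing scalar (degree-zero) part, so that after projection to $T^{(m)}$ they are genuine finite sums and the formal manipulations are legitimate; granting that, the lemma is a routine consequence of grading and linearity, with no real obstacle.
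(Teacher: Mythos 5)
Your proof is correct and follows essentially the same route as the paper: part (i) from the fact that the graded scaling is an algebra morphism and hence commutes with the $\exp$ and $\log$ power series, part (ii) from the multiplicativity of $\pi_m$ (your ideal decomposition $w=\pi_m w+r$ is exactly the paper's identity $\pi_m(\pi_m(a)\pi_m(b))=\pi_m(ab)$), and part (iii) from linearity of expectation on a finite-dimensional space. The paper's own proof is in fact terser than yours, recording only these two facts and omitting (iii) entirely as immediate.
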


\begin{proof}
(i) is obvious from the definition of $log$ and $exp$ as power series. (ii)
follows from the fact that for $a,b \in T(R,R^{d})$ $\pi_{m}(\pi_{m}(a)\pi
_{m}(b))= \pi_{m}(ab)$.
\end{proof}

For a path $\phi\in C_{0,bv}^{0}([0,T],R^{d})$; $s,t\in\lbrack0,T]$ we define
its signature (also known as Chen series) $S_{s,t}:C_{0,bv}^{0}([0,T],R^{d}%
)\rightarrow T(R^{d})$ by
\begin{equation}
S_{s,t}(\phi)=\sum_{k=0}^{\infty}\int_{s<t_{1}<\cdots<t_{k}<t}d\phi
(t_{1})\otimes\cdots\otimes d\phi(t_{k}),
\end{equation}
where the summation is to be interpreted as a direct sum. Using Stratonovich
iterated integrals we may define $S_{s,t}(\circ B)$ the random Stratonovich
signature of a Brownian motion (under Wiener measure).\newline

With these definitions in mind we can restate condition $\left(
\ref{cub-def-id}\right)  $ in the definition of a cubature measure as
\begin{equation}
E(\pi_{m}(S_{0,1}(\circ B)))=\sum_{j=1}^{n}\lambda_{j}\pi_{m}(S_{0,1}%
(\omega_{j})). \label{cubature}%
\end{equation}

Chen's theorem (see e.g. Lyons, Victoir \cite{lyons} ) tells us that
$L_{i}:=\pi_{m}(log(S_{0,1}(\omega_{i})))$ is a Lie polynomial. The measure
$Q_{\mathcal{L}}=\sum_{j=1}^{n}\lambda_{j}\delta_{L_{j}}$ satisfies
\begin{equation}
E(\pi_{m}(S_{0,1}(\circ B)))=E_{Q_{\mathcal{L}}(dL)}(\pi_{m}exp(L)).
\label{cublie}%
\end{equation}
Conversely for any Lie polynomials $L_{i}$ there exist continuous bounded
variation paths $\omega_{i}$ with log-signature $L_{i}$. Moreover if
$Q_{\mathcal{L}}$ satisfies (\ref{cublie}) $Q$ will satisfy (\ref{cubature}),
so the identities (\ref{cubature}) and (\ref{cublie}) are equivalent. The
proof of Chen's theorem can be extended to show that $\log(S_{s,t}(\circ B))$
is a (random) Lie series, see e.g. Lyons \cite{lyons2}. Such arguments can be
used to obtain small time asymptotics of the solution of Stratonovich SDEs,
see e.g. Ben Arous \cite{arous}. \newline Motivated by this discussion, and
following Lyons and Victoir \cite{lyons}, we make the following equivalent
definition for a cubature measure on Wiener space.

\begin{definition}
\label{cubfreeliedef} Let $m \in\mathbb{N}$ and $Q_{\mathcal{L}}=\sum
_{j=1}^{n} \lambda_{j} \delta_{L_{j}}$ with $\lambda_{i} >0$ and $L_{i} \in
\pi_{m} (\mathcal{L})$ for $i=1, \ldots, n$. We say $Q_{\mathcal{L}}$ is a
cubature measure on Wiener space if and only if
\[
E(S^{(m)}_{0,1}(\circ B)) = E_{Q_{\mathcal{L}}(dL)} \pi_{m} \exp(L).
\]

\end{definition}

In the following we will sometimes where no confusion arises drop the
reference to the integration variable $L$ and write $E_{Q_{\mathcal{L}}}$ in
place of $E_{Q_{\mathcal{L}}(dL)}$. A cubature measure over a general time
interval $\left[  0,T\right]  $ may be obtained from $Q_{_{\mathcal{L}}}$ by
homogeneously rescaling the Lie polynomial in its support and leaving the
weights unchanged. We have
\[
E(S_{0,T}^{(m)}(\circ B))=E_{Q_{\mathcal{L}}(dL)}\pi_{m}\exp(\langle\sqrt
{T},L\rangle).
\]

\section{Error estimate for the cubature approximation}

\label{sect-general-klv} In this section we derive our main error estimate and
demonstrate that $P_{T}f$ can be approximated to high order by a cubature
measure and the bounds on the error do not involve any derivative in the
$V_{0}$ direction (but only its Lie brackets).

\begin{theorem}
\label{mainthm} Let $P$ denote the Wiener measure and $Q$ a degree $m$
cubature measure supported on paths of bounded variation. Then
\[
\sup_{x\in R^{N}}\left\vert E_{P}f(\xi_{x,s})-E_{Q}f(\xi_{x,s})\right\vert
\leq C\left(  \sum_{j=m+1}^{2m}s^{j/2}\Vert f\Vert_{V,j}+s^{(m+1)/2}%
\Vert\nabla f\Vert_{\infty}\right)
\]
for any $s\in(0,1]$, $f\in C_{b}^{\infty}(R^{N})$. The constant $C$ depends on
$d$,$m$, $Q_{1}$,\newline$\sup_{\alpha\in A(m+2)\setminus A(m)}\Vert
V_{\alpha}Id(\cdot)\Vert_{\infty}$ and $E_{P}\Vert\pi_{k}(logS(\circ
B))\Vert_{2}^{k}$, $E_{Q_{1}}\Vert\pi_{k}(logS(\circ B))\Vert_{2}^{k}%
$,\newline$1\leq k\leq2m$.
\end{theorem}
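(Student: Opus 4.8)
The plan is to combine the one-step stochastic Taylor estimate (Lemma \ref{p1.1.1}) and its cubature analogue (Lemma \ref{lem1.1}) with the regularity estimates for the semigroup (Corollary \ref{crisan-regularity}), but to rearrange the Taylor expansion so that the drift only ever enters through iterated Lie brackets rather than through the raw differential operators $V_\alpha$. The key algebraic input is that the exponent $s^{j/2}$ attached to a term indexed by a word $\alpha$ in the stochastic Taylor expansion matches the homogeneous degree $\|\alpha\|$, so after passing to the log-signature (which by Chen's theorem lives in $\pi_m(\mathcal L)$) one can re-express the whole expansion in terms of $V_{[\beta_1]}\cdots V_{[\beta_j]}f$ with $\|\beta_1\|+\dots+\|\beta_j\|=k$, which is exactly the quantity controlled by $\|f\|_{V,k}$ and by Corollary \ref{crisan-regularity}. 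I would first write out the difference $E_P f(\xi_{x,s})-E_Q f(\xi_{x,s})$ using \eqref{cublie}/\eqref{cubature}: both expectations agree on $\pi_m(S_{0,s}(\circ B))$, so the difference is governed by the truncation error between $S_{0,s}$ and $\pi_m S_{0,s}$ composed with $\Gamma$ acting on $f$, plus the SDE Taylor remainder.

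The main steps, in order, would be: (1) Expand $f(\xi_{x,s})$ via Lemma \ref{p1.1.1} to order $2m$ (not $m$), writing the principal part as $\sum_{\|\alpha\|\le 2m} V_\alpha f(x)\,\int\! \circ dB^{\alpha}$ plus $R_{2m}(s,x,f)$; under both $P$ and $Q_1$-scaled-to-$s$ the remainder is $O\!\big(\sum_{j=2m+1}^{2m+2}s^{j/2}\|f\|_{V,\cdot}\big)$, which is absorbed in the $s^{(m+1)/2}\|\nabla f\|_\infty$ term once we note $\|f\|_{V,j}\le \|\nabla f\|_\infty\cdot(\text{const})$ is NOT available — so instead one keeps these as genuine $\|f\|_{V,j}$ contributions, consistent with the sum running to $2m$. (2) For the principal part, group the iterated-integral terms of homogeneous degree between $m+1$ and $2m$; on each such group take the difference of the $P$-expectation and the $Q_1$-expectation of $\pi_{2m}S_{0,s}$, noting these iterated integrals are no longer matched (cubature only matches up to degree $m$), and bound the $P$- and $Q_1$-moments of $\|\pi_k(\log S_{0,s}(\circ B))\|_2^k$ by scaling from $s=1$; this is where the moment hypotheses $E_P\|\pi_k(\log S)\|_2^k$, $E_{Q_1}\|\pi_k(\log S)\|_2^k$ enter. (3) For the "error one step too coarse" piece — the terms of degree exactly $m+1,\dots,2m$ that survive — rewrite them through the log-signature so that $V_\alpha$ becomes a product of bracketed fields $V_{[\beta_i]}$, pick up the factor $\sup_{\alpha\in A(m+2)\setminus A(m)}\|V_\alpha \mathrm{Id}\|_\infty$ from the Lipschitz/growth control of the flow, and bound $\|V_{[\beta_1]}\cdots V_{[\beta_j]}f\|_\infty$ by $\|f\|_{V,k}$, producing $\sum_{j=m+1}^{2m}s^{j/2}\|f\|_{V,j}$. (4) Collect the leftover genuinely-degree-$(m+1)$ remainder that cannot be re-expressed and bound it by $s^{(m+1)/2}\|\nabla f\|_\infty$ using Corollary \ref{crisan-regularity} with $P_0 f=f$, i.e.\ the trivial regularity estimate, or directly the Lipschitz bound.

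The hard part will be step (3): carefully tracking how the change of basis from the $\{V_\alpha\}$-indexed monomials to the bracketed $\{V_{[\beta]}\}$-products interacts with the homogeneous grading, so that no spurious $V_0$-derivative reappears and so that the degree bookkeeping $\|\beta_1\|+\dots+\|\beta_j\|=k$ is exactly preserved. This is the point where one must use part (ii)–(iii) of the algebraic lemma ($\pi_m\log\pi_m=\pi_m\log$, and $\Gamma$ commuting with expectations on $\pi_m\mathcal L$) to legitimately push $\Gamma$ and the expectation past the truncation, and it is essentially the content of the "slight sharpening" over Kusuoka, namely getting $2m$ rather than $m^{m+1}$ derivatives. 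A secondary technical nuisance is making the constant's dependence honest: one has to check that every implied constant only involves $d$, $m$, the path-length of $Q_1$, the finitely many quantities $\|V_\alpha\mathrm{Id}\|_\infty$ for $\alpha\in A(m+2)\setminus A(m)$, and the stated moments, with no hidden dependence on $f$ or $s$. Once the grading is handled, steps (1), (2) and (4) are routine applications of the It\^o formula, the scaling property of Brownian motion and of the cubature paths, and the triangle inequality.
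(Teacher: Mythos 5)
There is a genuine gap, and it sits exactly at the point you flag as ``the hard part'', your step (3). Your plan stays entirely at the tensor level: you expand $f(\xi_{s,x})$ to order $2m$ and hope to re-express the unmatched terms of homogeneous degree $m+1,\dots,2m$ (and the order-$2m$ remainder) as products of bracketed fields $V_{[\beta_1]}\cdots V_{[\beta_j]}f$ with $\beta_i\in A_1$. That re-expression is not available. The log-signature of Brownian motion has a nonzero $\epsilon_0$ component, so writing $\pi_{2m}S=\pi_{2m}\exp(\pi_{2m}\log S)$ produces products of $\Gamma$-images of Lie elements among which $V_0=V_{[(0)]}$ itself appears; since $(0)\notin A_1$, the seminorms $\Vert f\Vert_{V,j}$ do not control terms such as $V_0f$, $V_0V_0f$ or $V_0V_if$. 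These terms genuinely occur at degrees $2$ through $2m+2$ of the expansion, and the impossibility of bounding them by $\Vert f\Vert_{V,j}$ or $\Vert\nabla f\Vert_\infty$ is precisely the obstruction that forced Lyons--Victoir to assume the V0 condition --- the problem the theorem is designed to remove. The same issue defeats your treatment of the order-$2m$ remainder, whose coefficients $V_\alpha f$ with $\Vert\alpha\Vert\in\{2m+1,2m+2\}$ are neither of the form appearing in $\Vert f\Vert_{V,j}$ for $j\le 2m$ nor controlled by $\Vert\nabla f\Vert_\infty$.

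The missing idea is Kusuoka's two-stage detour through the flow level. First one replaces $E_{\mu}f(\xi_{s,x})$ by $E_\mu f\big(Exp[\Gamma\pi_m\langle\sqrt s,\log S_{0,1}(\circ B)\rangle](x)\big)$ under both $P$ and $Q$ (Lemmas \ref{k1} and \ref{l1}); the cost is only $Cs^{(m+1)/2}\Vert\nabla f\Vert_\infty$ because the Taylor estimates are applied to the identity function $Id$ (whence the constant's dependence on $\Vert V_\alpha Id\Vert_\infty$) while $f$ enters only through its Lipschitz constant. Second, the Baker--Campbell--Hausdorff splitting at the level of flows (Lemma \ref{k2}, applied with $L^{(2)}=-\epsilon_0$) factors the flow as $Exp(\Gamma\langle\sqrt s,\epsilon_0\rangle)$ followed by the flow of $\pi_m\log\big(\exp(\langle\sqrt s,-\epsilon_0\rangle)\,\langle\sqrt s,S_{0,1}(\circ B)\rangle\big)$, a Lie polynomial with \emph{no} $\epsilon_0$ component. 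Only then does one return to the tensor level via Lemma \ref{k3}, whose second assertion --- valid precisely because $(w,\epsilon_0)=0$ --- yields the $\sum_{j=m+1}^{2m}s^{j/2}\Vert f\Vert_{V,j}$ contribution, after which the cubature identity matches the two resulting tensor-level expectations exactly. Without this detour your steps (2)--(4) cannot be completed. Note also that Corollary \ref{crisan-regularity} and the UFG condition play no role in this one-step estimate; they enter only in the iterated corollary.
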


As an immediate consequence we obtain substituting Proposition 3.2 of
\cite{lyons} by Theorem \ref{mainthm} the following error estimate for the KLV
method that preserves its higher order convergence.

\begin{corollary}
With the notation of Corollary \ref{cor-rate} suppose the vector fields
satisfy the UFG\ condition then
\[
\sup_{x\in R^{N}}\left\vert P_{T}f\left(  x\right)  -E_{\nu}(f(\xi
_{T,x}))\right\vert \leq C\left(  T\right)  \Vert\nabla f\Vert_{\infty}\left(
s_{k}^{1/2}+\sum_{i=1}^{k-1}\left(  \sum_{j=m+1}^{2m}\frac{s_{i}^{j/2}%
}{(T-t_{i})^{\left(  j-1\right)  /2}}+s_{i}^{(m+1)/2}\right)  \right)
\]
and%
\[
\sup_{x\in R^{N}}\left\vert P_{T}f\left(  x\right)  -E_{\nu_{k}}(f(\xi
_{T,x}))\right\vert \leq Ck^{-(m-1)/2}\Vert\nabla f\Vert_{\infty}%
\]

\end{corollary}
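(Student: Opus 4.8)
The plan is to reduce the global error to a sum of one-step errors by the telescoping (Markov operator) argument already sketched in the text, apply the new one-step estimate of Theorem \ref{mainthm} on each subinterval, and then control the resulting seminorms of $P_{T-t_{i}}f$ through the UFG regularity bound of Corollary \ref{crisan-regularity}. Writing $P_{s}g(x)=E(g(\xi_{s,x}))$ and letting $Q^{(s_{i})}$ denote the one-step cubature operator over $[t_{i-1},t_{i}]$, the semigroup property and the concatenation structure of $\nu$ give $P_{T}=P_{s_{1}}\cdots P_{s_{k}}$ and $E_{\nu}(f(\xi_{T,\cdot}))=Q^{(s_{1})}\cdots Q^{(s_{k})}f$. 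Since each $Q^{(s_{i})}$ is a convex combination of evaluations along bounded variation paths it is a positive, constant-preserving operator and hence an $L^{\infty}$-contraction; the standard telescoping identity then yields
\[
\Vert P_{T}f-E_{\nu}(f(\xi_{T,\cdot}))\Vert_{\infty}\leq\sum_{i=1}^{k}\Vert(P_{s_{i}}-Q^{(s_{i})})P_{T-t_{i}}f\Vert_{\infty},
\]
because the composition $Q^{(s_{1})}\cdots Q^{(s_{i-1})}$ standing to the left of each difference does not increase the sup norm, while $P_{s_{i+1}}\cdots P_{s_{k}}=P_{T-t_{i}}$.

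For the steps $i=1,\ldots,k-1$ I would apply Theorem \ref{mainthm} with $f$ replaced by $P_{T-t_{i}}f$ and $s=s_{i}$, and then invoke Corollary \ref{crisan-regularity}. The latter gives, whenever $\Vert\alpha_{1}\Vert+\cdots+\Vert\alpha_{l}\Vert=j$, the bound $\Vert V_{[\alpha_{1}]}\cdots V_{[\alpha_{l}]}P_{T-t_{i}}f\Vert_{\infty}\leq C(T-t_{i})^{(1-j)/2}\Vert\nabla f\Vert_{\infty}$, so summing over the finitely many multi-index combinations that define the seminorm yields $\Vert P_{T-t_{i}}f\Vert_{V,j}\leq C(T-t_{i})^{-(j-1)/2}\Vert\nabla f\Vert_{\infty}$. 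The gradient term is handled by the elementary Lipschitz bound $\Vert\nabla P_{T-t_{i}}f\Vert_{\infty}\leq C(T)\Vert\nabla f\Vert_{\infty}$, which follows from differentiating under the expectation and using that the Jacobian of the flow of the $C_{b}^{\infty}$ vector fields has bounded $L^{1}$-norm uniformly on $[0,T]$. Substituting these into Theorem \ref{mainthm} produces exactly the summand $\sum_{j=m+1}^{2m}s_{i}^{j/2}(T-t_{i})^{-(j-1)/2}+s_{i}^{(m+1)/2}$, up to the factor $C(T)\Vert\nabla f\Vert_{\infty}$.

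The terminal step $i=k$ must be treated separately, and this is the first genuine obstacle: since $T-t_{k}=0$ the regularity estimate of Corollary \ref{crisan-regularity} is unavailable, so Theorem \ref{mainthm} cannot be used there (it would require higher derivatives of the merely Lipschitz $f$). Instead I would use the crude estimate that both $P_{s_{k}}f(x)$ and $Q^{(s_{k})}f(x)$ differ from $f(x)$ by at most $Cs_{k}^{1/2}\Vert\nabla f\Vert_{\infty}$, because under either measure the displacement $\xi_{s_{k},x}-x$ has size of order $\sqrt{s_{k}}$ (in $L^{1}$ under $P$, and along each scaled cubature path under $Q$). This produces the isolated term $s_{k}^{1/2}\Vert\nabla f\Vert_{\infty}$ and completes the first displayed estimate.

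Finally, for the rate I would insert the partition $t_{j}=T(1-(1-j/k)^{\gamma})$, so that $T-t_{i}=T(1-i/k)^{\gamma}$ and, by the mean value theorem, $s_{i}\sim(T\gamma/k)(1-i/k)^{\gamma-1}$. Each summand of index $j$ then behaves like $k^{-j/2}(1-i/k)^{(\gamma-j)/2}$, and interpreting $\sum_{i=1}^{k-1}$ as a Riemann sum shows that, provided $\gamma>m-1$, the integral $\int_{0}^{1}u^{(\gamma-j)/2}\,du$ converges and each group contributes $O(k^{-(j-2)/2})$; the terms $\sum_{i}s_{i}^{(m+1)/2}$ and $s_{k}^{1/2}=T^{1/2}k^{-\gamma/2}$ are likewise $O(k^{-(m-1)/2})$. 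The dominant contribution comes from $j=m+1$, giving the claimed order $k^{-(m-1)/2}$. I expect the most delicate part to be precisely this asymptotic bookkeeping near $i=k$, namely verifying that the near-singular summands remain summable and that the choice $\gamma>m-1$ balances the terminal term $s_{k}^{1/2}$ against the bulk sum, exactly as in the original analysis but now with $2m$ rather than $m^{m+1}$ derivative orders entering.
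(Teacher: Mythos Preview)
Your proposal is correct and follows exactly the route the paper indicates: the paper does not spell out a proof of this corollary but merely says it follows by ``substituting Proposition 3.2 of \cite{lyons} by Theorem \ref{mainthm}'' in the original telescoping/Markov-operator argument, and you have faithfully reconstructed that substitution, including the treatment of the terminal step via the crude $s_{k}^{1/2}$ displacement bound and the rate computation for the Kusuoka partitions with $\gamma>m-1$.
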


To prove the theorem we will adopt the analysis of Kusuoka \cite{kusuoka} to
the cubature on Wiener space setting. In the process we sharpen the estimates
slightly allowing us to obtain a bound with at most $2m$ derivatives instead
of $m^{m+1}$ (compare Kusuoka \cite{kusuoka} Lemma 18). Recall that $Id$ is
the identity function on $R^{N}$ defined by $Id(x)=x$.

Before going into technical details we give an interpretation of the ideas
developed in Kusuoka \cite{kusuoka}, summarised in Figure 1. A stochastic
Taylor expansion of $f(\xi_{x,s})$ can be written as $\Gamma\left(  \pi
_{m}(S_{0,1}(\circ B))\right)  f\left(  x\right)  ,$ i.e. the differential
operator obtained from the truncated signature under the map $\Gamma$ acting
on $f$ at $x$. As the signature is taking values in the tensor algebra we may
call the Taylor approximation the tensor level (approximation). It follows
immediately from the definition of a degree $m$ cubature measure on Wiener
space that the expectation of the degree $m$ Taylor approximation under $P$
and $Q$ is identical. Although the actual cubature step (exchanging the
measures $P$ and $Q$) has to take place at the tensor level we cannot do it
directly as the error bounds would involve higher derivatives in the direction
of $V_{0}$, which we have set out to avoid. Instead we follow Kusuoka
\cite{kusuoka} and observe that the signature may be written as the
exponential of the log signature and by interchanging $\exp$ and $\Gamma$ we
obtain a new approximation at the level of the flow by $f\left(
\text{Exp}\left[  \Gamma\left(  \pi_{m}\log S_{0,1}(\circ B)\right)  )\right]
\left(  x\right)  \right)  .$ Lemma \ref{k3} formalises this statement and
allows us move between the tensor algebra and the flow level. Crucially at the
level of flows it suffices to approximate $\xi_{t,x}$ by $\hat{\xi}_{t,x}$ in
$L^{1}$ norm as the bound for the approximation of $f(\hat{\xi}_{t,x})$ is
only increased by a factor of $\Vert\nabla f\Vert_{\infty}$.\newline

A key observation Kusuoka exploits is that if the Lie polynomial defining the
flow does not involve a $\epsilon_{0}$ component the error bound for moving
between flow and tensor level does not involve higher $V_{0}$ derivatives. By
using a splitting argument at the level of flows (Lemma \ref{k2}) he can
replace the log-signature by a Baker-Campbell-Hausdorff style term that does
not involve $\epsilon_{0}$. This allows him to move to the tensor level and
complete the approximation without using higher $V_{0}$ derivatives. \newline

To apply Kusuoka's argument to cubature on Wiener space we will go through
this approximation process (the full lines in Figure 1) for both the Wiener
measure and the cubature measure. By using the defining cubature identity
(\ref{cubature}) we will then be able to see that the approximations at the
end of each chain agree and obtain the desired bound.\newline

\begin{figure}[ptb]
\begin{picture}(385,215)
\put(10,5){\framebox(175,30){ $E  f \Big( Exp \left[ \, \, \Gamma \, \pi_m \,  log S_{0,1}(\circ B)   \, \right](x) \Big) $ }}
\put(10,185){\framebox(150,30){$ E (f(\xi_{1,x})) $ }}
\put(250,185){\makebox(50,30)[l]{SDE}}
\put(230,125){\makebox(50,30)[l]{Tensor level}}
\put(230,5){\makebox(50,30)[l]{Flow level}}
\put(40,125){\framebox(150,30){$ E \left( \Gamma \left( \pi_m \, S_{0,1}(\circ B) \right) f (x)  \right) $ }}
\put(75,35){\framebox(290,33){$ E  \bigg[ f\left(Exp\left[\Gamma\left(\pi_m log( exp (-\epsilon_0) \, S_{0,1} (\circ B) ) \right)\right](Exp(V_0)(x))\right)
\bigg] $ }}
\put(80,92){\framebox(275,33){$ E  \bigg( \left[ \Gamma\ \pi_m (exp (-\epsilon_0) \, S_{0,1} (\circ B))  \right] f(Exp(V_0)(x)) \bigg) $ }}
\put(30,185){\vector(0,-1){150}}
\put(55,45){\vector(0,-1){10}}
\put(55,55){\line(0,1){10}}
\put(55,75){\line(0,1){10}}
\put(55,95){\line(0,1){10}}
\put(55,115){\vector(0,1){10}}
\put(195,68){\vector(0,1){24}}
\put(75,68){\makebox(50,24)[l]{Lemma \ref{k3}}}
\put(-15,105){\makebox(50,24)[l]{Lemma \ref{k1}}}
\put(90,180){\line(0,-1){8}}
\put(90,165){\vector(0,-1){10}}
\end{picture}
\end{figure}The following two lemmas may be found in Kusuoka \cite{kusuoka}
(Corollary 15 and 17) and we will state them without proof.

\begin{lemma}
\label{k1} Let $m \geq1$, then there exists $C>0$ such that
\[
\Big| E_{P}\left(  f(\xi_{s,x})\right)  - E_{P} \left\{  f \Big( Exp \left[
\, \, \Gamma\, \pi_{m} \, \langle\sqrt{s}, log S_{0,1}(\circ B) \rangle\,
\right]  (x)\Big) \right\}  \Big| \leq C s^{(m+1)/2} \| \nabla f \|_{\infty}%
\]
for any $x \in R^{N}$, $s \in(0,1]$ and $f \in C_{b}^{\infty}(R^{N})$.
\end{lemma}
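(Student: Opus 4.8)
The plan is to prove this by comparing the SDE flow $\xi_{s,x}$ with the flow generated by the vector field obtained from the truncated log-signature, using the stochastic Taylor expansion of Lemma \ref{p1.1.1} on both sides. First I would set $\hat\xi_{s,x} := Exp\left[\Gamma\,\pi_m\,\langle\sqrt s, \log S_{0,1}(\circ B)\rangle\right](x)$ and observe that, since $\langle\sqrt s, \log S_{0,1}(\circ B)\rangle$ is $\pi_m\mathcal L$-valued, $\Gamma$ applied to it yields a genuine (random) vector field $W$ on $R^N$ whose flow at time $1$ is $\hat\xi_{s,x}$. The key reduction is that it suffices to estimate $E_P|\xi_{s,x} - \hat\xi_{s,x}|$ in $L^1$, because for Lipschitz $f$ one has $|f(\xi_{s,x}) - f(\hat\xi_{s,x})| \le \|\nabla f\|_\infty |\xi_{s,x} - \hat\xi_{s,x}|$, and this is exactly where the factor $\|\nabla f\|_\infty$ enters; after that one may work with $f = Id$ and only the seminorms $\|V_\alpha Id\|_\infty$ for $|\alpha|$ bounded appear, which are covered by the stated constant dependence.

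Next I would Taylor-expand both $\xi_{s,x}$ and $\hat\xi_{s,x}$ to degree $m$ in the sense of \eqref{STremainder}. For $\xi_{s,x}$ this is Lemma \ref{p1.1.1} applied to $f = Id$, giving $\xi_{s,x} = \sum_{\alpha\in A(m)} V_\alpha Id(x)\int \circ dB^{\alpha_1}\cdots\circ dB^{\alpha_k} + R_m(s,x,Id)$, i.e. the degree-$m$ part is precisely $\Gamma(\pi_m S_{0,s}(\circ B))Id(x)$. For $\hat\xi_{s,x}$, one expands the autonomous ODE flow $Exp$ as an ordinary (deterministic, pathwise) Taylor series in its generating vector field $W = \Gamma(\pi_m\langle\sqrt s,\log S\rangle)$; since $\pi_m\exp(\pi_m\log w) = \pi_m\exp(\log w) = \pi_m w$ by part (ii) of the algebraic Lemma, the degree-$\le m$ terms of this expansion also reassemble to $\Gamma(\pi_m S_{0,s}(\circ B))Id(x)$, up to a remainder that is a finite sum of iterated brackets of total graded degree $\ge m+1$, each scaling like $s^{(m+1)/2}$ or higher and each bounded in terms of $\sup_{\alpha\in A(m+2)\setminus A(m)}\|V_\alpha Id\|_\infty$ together with moments of $\|\pi_k\log S(\circ B)\|_2$. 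Subtracting, the degree-$\le m$ parts cancel exactly, leaving $\xi_{s,x} - \hat\xi_{s,x}$ equal to $R_m(s,x,Id)$ minus the flow-expansion remainder; taking $L^1$ norms and applying the bound of Lemma \ref{p1.1.1} (with $f = Id$, so the right side is $C\sum_{j=m+1}^{m+2}s^{j/2}\sup_{\alpha\in A(j)\setminus A(j-1)}\|V_\alpha Id\|_\infty \le C' s^{(m+1)/2}$ since $s \le 1$) together with the Cauchy–Schwarz control of the remaining terms via the assumed moments of the log-signature gives $E_P|\xi_{s,x} - \hat\xi_{s,x}| \le C s^{(m+1)/2}$, uniformly in $x$.

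The main obstacle is the careful bookkeeping in the second expansion: one must verify that expanding the ODE flow $Exp[W](x)$ and truncating at graded degree $m$ really does reproduce the same degree-$m$ polynomial in the $V_\alpha Id(x)$ as the stochastic Taylor expansion, and that every omitted term is genuinely of order $s^{(m+1)/2}$ in expectation. This requires (a) the homogeneity book-keeping provided by the grading that assigns degree two to $\epsilon_0$ and degree one to $\epsilon_1,\dots,\epsilon_d$ — so that a bracket of graded degree $j$ picks up exactly $s^{j/2}$ from the scaling $\langle\sqrt s,\cdot\rangle$ — and (b) uniform integrability of the log-signature, which is why the moments $E_P\|\pi_k(\log S(\circ B))\|_2^k$ for $1\le k\le 2m$ appear in the constant. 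Since this is precisely Corollary 15 of Kusuoka \cite{kusuoka}, I would ultimately cite that reference for the detailed estimate, presenting the argument above as the conceptual skeleton; the only genuinely new point over \cite{kusuoka} is tracking that no derivative of $Id$ in the pure $V_0$ direction is needed, which is automatic because $V_{[\emptyset]} = 0$ and the log-signature lies in $\mathcal L$, so every surviving term is a Lie bracket.
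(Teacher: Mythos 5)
The paper states this lemma without proof, citing Corollary 15 of Kusuoka \cite{kusuoka}, and your sketch --- reducing to an $L^{1}$ estimate of $\xi_{s,x}-\hat{\xi}_{s,x}$ via the Lipschitz bound, cancelling the degree-$\leq m$ parts of the stochastic Taylor expansion against the ODE Taylor expansion of the flow using $\pi_{m}\exp(\pi_{m}\log S)=\pi_{m}S$, and controlling both remainders by $s^{(m+1)/2}$ with the help of the log-signature moments --- is precisely the argument the paper itself writes out for the cubature-measure analogue, Lemma \ref{l1}, so it is correct and in line with the intended proof. The one cosmetic inaccuracy is that the flow-expansion remainder is controlled by compositions of bracket vector fields applied to $Id$ of graded degree up to $2m$ (the quantities $\left\Vert \Gamma\left((\pi_{2m}-\pi_{m})w^{\otimes j}\right)Id\right\Vert_{\infty}$ from Lemma \ref{k3}) rather than by $\sup_{\alpha\in A(m+2)\setminus A(m)}\Vert V_{\alpha}Id\Vert_{\infty}$ alone, but since the $V_{i}$ lie in $C_{b}^{\infty}$ and the lemma asserts only the existence of some $C$, this does not affect the conclusion.
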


The second lemma is the splitting argument at the level of flows mentioned in
the previous discussion.

\begin{lemma}
\label{k2}

Let $m \geq2$ and $L^{(i)}$, $i=1,2$ denote two $\mathcal{L}^{(m)}$ valued
random variables with $E[ \| \pi_{k}(L^{(i)}) \|_{2}^{k} ] < \infty$ for any
$k \geq1$. Then for any $m \geq1$ and $p \in[1,\infty)$, there is $C>0$ such
that
\[
\bigg\| Exp(\Gamma\pi_{m} \langle\sqrt{s},L^{(1)} \rangle) \left(  Exp\left[
\Gamma\pi_{m} \langle\sqrt{s},L^{(2)} \rangle\right]  \,(x)\,\right)
\]%
\[
- Exp\left[  \Gamma\left(  \pi_{m} log( exp \langle\sqrt{s},L^{(2)}\rangle\,
exp \langle\sqrt{s},L^{(1)}\rangle) \right)  \right]  (x) \bigg\|_{L^{p}} \leq
C s^{(m+1)/2}%
\]
for all $s \in(0,1]$ and $x \in R^{N}$.
\end{lemma}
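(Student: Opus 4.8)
\emph{Plan.} The element $\pi_m\log\big(\exp\langle\sqrt{s},L^{(2)}\rangle\exp\langle\sqrt{s},L^{(1)}\rangle\big)$ is the degree-$m$ truncation of the Baker--Campbell--Hausdorff combination of the two Lie polynomials, and at the level of \emph{formal} flows the composition $Exp(U_1)\circ Exp(U_2)$ \emph{is} the flow of that combination; the two flows in the statement differ only because $\Gamma$ applied to the degree-$(>m)$ part of the infinite BCH series has been truncated away, and under the homogeneous scaling this tail carries a factor $s^{(m+1)/2}$. I would make this rigorous by Taylor expanding each flow in powers of $s^{1/2}$, recognising both truncated expansions as $\Gamma$ applied to the \emph{same} degree-$\le m$ tensor, and estimating the remainders in $L^p$.

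\emph{A Taylor expansion for flows.} Write $U_i:=\Gamma\pi_m\langle\sqrt{s},L^{(i)}\rangle$ (a random vector field, since $L^{(i)}$ is a Lie polynomial); as $\langle\sqrt{s},\cdot\rangle$ is a grading-preserving algebra automorphism, $Z_s:=\log\big(\exp\langle\sqrt{s},L^{(2)}\rangle\exp\langle\sqrt{s},L^{(1)}\rangle\big)=\langle\sqrt{s},Z\rangle$ with $Z:=\log\big(\exp L^{(2)}\exp L^{(1)}\big)$ an $s$-independent Lie series, and $\pi_m Z_s=\langle\sqrt{s},\pi_m Z\rangle$. The technical core is: for a random vector field $V=\Gamma\langle\sqrt{s},M\rangle$ with $M\in\mathcal{L}^{(m)}$, $E\|\pi_k M\|_2^k<\infty$ for every $k$, and $g$ either $C_b^\infty$ or $g=Id$,
\[
g\big(Exp(V)(x)\big)=\Gamma\big(\pi_m\exp\langle\sqrt{s},M\rangle\big)g\,(x)+r(s,x),\qquad \sup_x\big(E|r(s,x)|^p\big)^{1/p}\le C s^{(m+1)/2}.
\]
This follows by iterating $\tfrac{d}{dt}g(Exp(tV)(x))=(Vg)(Exp(tV)(x))$ to produce the finite Taylor polynomial $\sum_{k=0}^m\tfrac1{k!}(V^kg)(x)$ and an integral remainder in $V^{m+1}g$: since $\langle\sqrt{s},M\rangle$ has no degree-$0$ component, $V^{m+1}=\Gamma\big(\langle\sqrt{s},M\rangle^{\otimes(m+1)}\big)$ equals $s^{(m+1)/2}$ times an operator whose action on $g$ is controlled by a power of $\|M\|_2$ and by $C_b$-norms of iterated brackets of the $V_i$ applied to $g$ (for $g=Id$, by finitely many $\|V_\alpha Id\|_\infty$); and, $\Gamma$ being an algebra homomorphism, $\sum_{k=0}^m\tfrac1{k!}V^k=\Gamma\big(\sum_{k=0}^m\tfrac1{k!}\langle\sqrt{s},M\rangle^{\otimes k}\big)$ differs from $\Gamma\big(\pi_m\exp\langle\sqrt{s},M\rangle\big)$ only by $\Gamma$ of finitely many homogeneous components of degree $\ge m+1$, whose action on $g$ is again of size $s^{(m+1)/2}$. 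The moment hypotheses on $L^{(1)},L^{(2)}$ are precisely what lets one take $L^p$ norms of these random coefficients (here $M$ is one of the $L^{(i)}$ or a Lie polynomial built by BCH from them, whose norm is polynomial in the $\|\pi_j L^{(i)}\|_2$).

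\emph{Assembling the two chains.} Take $g=Id$. Expanding the outer flow $Exp(U_1)$ and then each $C_b^\infty$ function $U_1^kId$ ($0\le k\le m$, with $U_1^0Id=Id$) along $Exp(U_2)$, and in each step discarding homogeneous components of degree $>m$ together with the integral remainders (all with $L^p$-norm $\le Cs^{(m+1)/2}$ uniformly in $x,s$, using that $U_2^{m+1}$ has degree $\ge m+1$ and that $x\mapsto R_1(Exp(U_2)(x))$ is bounded by $\|R_1\|_\infty$), one collects, with $U_2^lU_1^k=\Gamma\big(\langle\sqrt{s},L^{(2)}\rangle^{\otimes l}\langle\sqrt{s},L^{(1)}\rangle^{\otimes k}\big)$ and $\pi_m\big(\exp\langle\sqrt{s},L^{(2)}\rangle\exp\langle\sqrt{s},L^{(1)}\rangle\big)=\pi_m\exp Z_s$,
\[
Exp(U_1)\big(Exp(U_2)(x)\big)=\Gamma\big(\pi_m\exp Z_s\big)Id\,(x)+O(s^{(m+1)/2})\quad\text{in }L^p.
\]
On the other side, a single application of the flow expansion to $V=\Gamma\pi_m Z_s=\Gamma\langle\sqrt{s},\pi_m Z\rangle$ gives $Exp(\Gamma\pi_m Z_s)(x)=\Gamma\big(\pi_m\exp(\pi_m Z_s)\big)Id\,(x)+O(s^{(m+1)/2})$, and $\pi_m\exp(\pi_m Z_s)=\pi_m\exp Z_s$ by the truncation identity noted earlier. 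Subtracting the two displays gives the claim, with $C$ depending on $d,m,p$, on $C_b^\infty$-norms of the $V_i$ (equivalently on $\sup_\alpha\|V_\alpha Id\|_\infty$ over a finite range of $\|\alpha\|$), and on moments of the $\|\pi_k L^{(i)}\|_2$.

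\emph{Main obstacle.} The work is concentrated in the flow Taylor expansion: performing the iterated expansion uniformly in $x$ and in the realisation of $M$, justifying the interchanges (the flows are global since $V$ is bounded), and---if one wants the precise constant dependence of Theorem \ref{mainthm} rather than a crude one---tracking exactly which $C_b$-norms of the $V_i$, i.e.\ which $\|V_\alpha Id\|_\infty$, enter the degree-$(>m)$ corrections and the $V^{m+1}$ remainder, and which moments of the Lie polynomials are needed to pass to $L^p$.
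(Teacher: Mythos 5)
The paper itself gives no proof of Lemma \ref{k2}; it is imported verbatim from Kusuoka \cite{kusuoka} (Corollary 15 and 17), so there is no in-paper argument to compare against. Your reconstruction is nonetheless essentially the intended one, and it runs on exactly the mechanism the paper does spell out in the proof of Lemma \ref{k3}: iterate $\frac{d}{dt}g(Exp(tV)(x))=(Vg)(Exp(tV)(x))$ to Taylor-expand each flow, check that the degree-$\le m$ tensor produced by the two-step composition, namely $\pi_m\big(\exp\langle\sqrt{s},L^{(2)}\rangle\exp\langle\sqrt{s},L^{(1)}\rangle\big)$, coincides with the one produced by the single BCH flow, $\pi_m\exp(\pi_m Z_s)=\pi_m\exp Z_s$, and absorb everything of homogeneity $\ge m+1$ into an $s^{(m+1)/2}$ remainder; taking $g=Id$ keeps the coefficients controlled because $V_\alpha Id\in C_b^\infty$ for $\|\alpha\|\ge 1$, and you have the operator ordering ($U_2^lU_1^k\leftrightarrow\langle\sqrt{s},L^{(2)}\rangle^{\otimes l}\otimes\langle\sqrt{s},L^{(1)}\rangle^{\otimes k}$) the right way round. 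The one point you pass over quickly is the passage to $L^p$: the discarded terms are $s^{(m+1)/2}$ times polynomials in the graded norms $\Vert\pi_j L^{(i)}\Vert_2$, and bounding their $p$-th moments for arbitrary $p$ needs more than the literal hypothesis $E[\Vert\pi_k L^{(i)}\Vert_2^k]<\infty$ as stated; this is supplied by Kusuoka's moment-similarity framework, and is anyway harmless in the paper's application, where $L^{(2)}=-\epsilon_0$ is deterministic and $L^{(1)}$ is either the Brownian log-signature (all moments finite) or finitely supported. With that bookkeeping made explicit, your argument is a sound proof of the lemma.
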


Note that
\[
Exp(\Gamma\pi_{m} \langle\sqrt{s},L^{(1)} \rangle) \left(  Exp\left[
\Gamma\pi_{m} \langle\sqrt{s},L^{(2)} \rangle\right]  \,(x)\,\right)
\]
is the composition of $Exp(\cdot)$ functions.\newline

The following Lemma bounds the difference between flow and tensor
approximation. It improves on \cite{kusuoka} by considering a different
truncation of the Taylor approximation.

\begin{lemma}
\label{k3} Let $w=\sum_{i=1}^{m}w_{i}\in$ $\pi_{m}(\mathcal{L})$, such that
each $w_{i}\in\left(  \pi_{i}-\pi_{i-1}\right)  (\mathcal{L}),$ (i.e. each
$w_{i}$ is a homogeneous Lie polynomial of degree $i)$ and $m\geq1$ then
\[
\sup_{x\in R^{N}}\left\vert f\big(Exp[\Gamma(w)](x)\big)-\left(  \Gamma\left[
\pi_{m}exp\left(  w\right)  \right]  f\right)  (x)\right\vert \leq\sum
_{j=1}^{m}\left\Vert \Gamma\left(  (\pi_{2m}-\pi_{m})w^{\otimes j}\right)
f\right\Vert _{\infty}%
\]
If moreover $w$ satisfies $(w,\epsilon_{0})=0$ then there is a constant $C>0$
such that
\[
\sup_{x\in R^{N}}\left\vert f(Exp[\Gamma\langle\sqrt{s},w\rangle
](x)-\Gamma\big(\pi_{m}exp(\langle\sqrt{s},w\rangle)f\big)(x)\right\vert \leq
C\sum_{j=m+1}^{2m}s^{j/2}\Vert f\Vert_{V,j}%
\]
for any $s\in(0,1]$, $x\in R^{N}$.
\end{lemma}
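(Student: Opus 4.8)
\textbf{Proof proposal for Lemma \ref{k3}.}

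The plan is to compare the flow $Exp[\Gamma(w)](x)$ with the truncated tensor operator $\Gamma[\pi_m \exp(w)]f$ via a Taylor-type expansion, using the fact that $Exp[\Gamma(w)](x)$ is, by definition, the time-one solution of the ODE driven by the vector field $\Gamma(w)$. First I would recall the elementary identity that for a smooth vector field $W$ on $R^N$ and smooth $g$, the ODE solution $X(t)$ of $\dot X = W(X)$, $X(0)=x$ satisfies the Taylor expansion $g(X(1)) = \sum_{j=0}^{m} \frac{1}{j!}(W^j g)(x) + (\text{remainder})$, where $W^j$ denotes the $j$-fold application of $W$ as a first-order differential operator. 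With $W = \Gamma(w)$, the principal terms assemble exactly into $\Gamma(\exp(w))f$ because $\Gamma$ is an algebra homomorphism and $\exp(w) = \sum_j \frac{1}{j!} w^{\otimes j}$. Thus the difference splits into two pieces: (a) the "extra" terms in $\Gamma(\exp(w))f$ beyond level $m$, i.e. $\Gamma((1-\pi_m)\exp(w))f$, which since $w = \sum_{i=1}^m w_i$ starts at degree $1$ means only degrees $m+1$ up to $2m$ survive after a finite truncation — giving the sum $\sum_{j=1}^m \Gamma((\pi_{2m}-\pi_m)w^{\otimes j})f$ in the statement; and (b) the ODE Taylor remainder, which must itself be controlled.

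The key structural point making this work is that the remainder in the ODE Taylor expansion, when expanded far enough, is again expressible through the differential operators $V_{[\alpha]}$. This is where I would lean on the induction underlying Lemma \ref{p1.1.1}: expanding $g(X(1))$ to order $m$ in the flow of $\Gamma(w)$, the remainder is a sum of iterated integrals (here just iterated time-integrals from $0$ to $1$ since the driving "path" is the deterministic ODE time) of terms of the form $\Gamma(w^{\otimes j})g$ evaluated along the flow, for $j$ ranging in the appropriate window. Bounding these in supremum norm gives precisely $\sum_{j=1}^m \|\Gamma((\pi_{2m}-\pi_m)w^{\otimes j})f\|_\infty$ after combining with piece (a) — the homogeneity of $w$ together with the truncation at level $2m$ is what cuts the sum off. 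This establishes the first, unscaled inequality.

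For the second inequality I would specialize to $\langle\sqrt{s},w\rangle$ and use the hypothesis $(w,\epsilon_0)=0$. Writing $w=\sum_{i=1}^m w_i$ with $w_i$ homogeneous of degree $i$, the scaling sends $w_i \mapsto s^{i/2} w_i$, so $(\pi_{2m}-\pi_m)\langle\sqrt s,w\rangle^{\otimes j}$ is a combination of homogeneous pieces of degree $\ell$ with $m+1 \le \ell \le 2m$, each carrying a factor $s^{\ell/2}$. Applying the first inequality and collecting by total degree $\ell$ gives a bound $C\sum_{\ell=m+1}^{2m} s^{\ell/2}\,\|\Gamma(\text{homogeneous Lie-monomial products of degree }\ell)f\|_\infty$. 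The condition $(w,\epsilon_0)=0$ ensures that after expanding each $w_i$ in a Hall/Lyndon basis of the free Lie algebra, no basis element equal to $\epsilon_0$ itself appears, so every differential operator that shows up is a product of the bracket vector fields $V_{[\alpha_1]}\cdots V_{[\alpha_r]}$ with $\alpha_i \in A_1$ and $\|\alpha_1\|+\cdots+\|\alpha_r\|=\ell$ — i.e. exactly the operators appearing in the seminorm $\|f\|_{V,\ell}$, with \emph{no bare $V_0$ derivatives}. Hence the bound is $C\sum_{\ell=m+1}^{2m} s^{\ell/2}\|f\|_{V,\ell}$ as claimed, with $C$ absorbing the basis-dependent combinatorial constants and the degree-$\le m$ structure constants of $\mathcal L$.

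The main obstacle I anticipate is making the bookkeeping in the preceding paragraph fully rigorous: one must argue carefully that $w^{\otimes j}$, for $w$ a Lie element with no $\epsilon_0$-component, maps under $\Gamma$ into a linear combination of products $V_{[\alpha_1]}\cdots V_{[\alpha_r]}$ whose indices $\alpha_i$ lie in $A_1$ and whose degrees add up correctly — in other words, that the absence of $\epsilon_0$ from the degree-one part of $w$ propagates through tensor powers and through the passage from $T(R,R^d)$ to differential operators. This is essentially the observation attributed to Kusuoka in the discussion preceding the lemma, and the honest version requires expressing $w_i$ in a basis of $\pi_i(\mathcal L)$ adapted to the grading (where degree two is assigned to $\epsilon_0$), checking that $\epsilon_0$ can only enter inside a bracket, and then noting that brackets map under $\Gamma$ to the vector fields $V_{[\beta]}$ which are exactly the allowed "letters" in $\|\cdot\|_{V,\ell}$. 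Everything else — the ODE Taylor expansion, the finite truncation, the scaling — is routine once this algebraic fact is in place; for the details of the remainder formula I would cite the induction behind Lemma \ref{p1.1.1} and \cite{thesis}.
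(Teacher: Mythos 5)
Your overall strategy --- Taylor-expand $f$ along the flow of $\Gamma(w)$, identify the principal part with $\Gamma(\pi_{m}\exp(w))f$, and sort the leftover by the grading --- is the same as the paper's, and your treatment of the second inequality (the scaling $w_{i}\mapsto s^{i/2}w_{i}$, and the observation that $(w,\epsilon_{0})=0$ forces every surviving operator to be a product $V_{[\alpha_{1}]}\cdots V_{[\alpha_{r}]}$ with $\alpha_{i}\in A_{1}$ and degrees summing to $\ell$, hence controlled by $\Vert f\Vert_{V,\ell}$) is correct and in fact more explicit than the paper's one-line remark. The gap is in your step (a). If you Taylor-expand to order $m$ in $t$, the principal terms are $\sum_{j=0}^{m}\frac{1}{j!}\Gamma(w^{\otimes j})f(x)$, and the discrepancy between this and $\Gamma(\pi_{m}\exp(w))f(x)$ is $\sum_{j=1}^{m}\frac{1}{j!}\Gamma\left((1-\pi_{m})w^{\otimes j}\right)f(x)$, whose homogeneous components run up to degree $jm\leq m^{2}$ (take $w=w_{m}$ and $j=m$), not $2m$; likewise the order-$m$ integral remainder involves $\Gamma(w^{\otimes(m+1)})f$ along the flow, of degree up to $m(m+1)$. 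Your claim that ``only degrees $m+1$ up to $2m$ survive after a finite truncation'' is unjustified and false for this decomposition: nothing in a naive order-in-$t$ expansion kills the degrees between $2m+1$ and $m^{2}$. What this route actually yields is essentially Kusuoka's original estimate (the one with the much larger derivative count), which is precisely what this lemma is advertised as sharpening.

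The missing idea is a \emph{degree-adapted} rather than order-in-$t$ truncation of the flow expansion. One iterates the identity $\frac{d}{dt}g(Exp(t\Gamma(w))(x))=(\Gamma(w)g)(Exp(t\Gamma(w))(x))$ term by term, writing $\Gamma(w)=\sum_{i}\Gamma(w_{i})$, and continues expanding a given word $w_{i_{2}}\otimes\cdots\otimes w_{i_{j}}$ only while its accumulated degree $i_{2}+\cdots+i_{j}$ is still at most $m$, freezing a branch as a remainder term the first time its degree overshoots. The fully expanded part is then exactly $\Gamma(\pi_{m}\exp(w))f(x)$, and the remainder is a sum of integrals along the flow of $\Gamma(w_{i_{1}}\otimes\cdots\otimes w_{i_{j}})f$ over the index set $\{i_{2}+\cdots+i_{j}\leq m,\ i_{1}+\cdots+i_{j}>m\}$; since $i_{1}\leq m$, every such word has degree in $(m,2m]$, which is where the window $\pi_{2m}-\pi_{m}$ and the range $j=m+1,\dots,2m$ in the final estimate come from. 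With this replacement, the rest of your argument --- the identification of $\Gamma(w_{i_{1}}\otimes\cdots\otimes w_{i_{j}})$ with combinations of the bracket vector fields when $(w,\epsilon_{0})=0$, and the homogeneous scaling --- goes through as you describe.
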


\begin{proof}
We first proceed as in Proposition 9 of \cite{kusuoka} by noting that
$\Gamma(w)$ is a smooth vector field defined on all of $R^{N}$. Thus for any
smooth $f$ we have
\[
\frac{d}{dt}f(Exp(t\Gamma(w))(x))=((\Gamma w)f)(Exp(t\Gamma(w))(x)).
\]
Using this identity iteratively to expand $f(exp(t\Gamma(w))(x))$ in a Taylor
expansion one sees that
\begin{align*}
&  f(Exp(t\Gamma(w))(x))-\sum_{j=0}^{m}\frac{t^{j}}{j!}\left(  \Gamma\left(
\sum_{i_{1}+\cdots+i_{j}=0}^{m}w_{i_{1}}\otimes\cdots\otimes w_{i_{j}}\right)
\right)  f(x)\\
&  =\sum_{i_{2}+\cdots+i_{j}\leq m,\text{ }i_{1}+\cdots+i_{j}>m}\int_{0}%
^{t}\frac{\left(  t-s\right)  ^{j}}{j!}\left[  \Gamma\left(  w_{i_{1}}%
\otimes\cdots\otimes w_{i_{j}}\right)  f\right]  \left(  \exp\left[
s\Gamma\left(  w\right)  \right]  \left(  x\right)  \right)  ds.
\end{align*}
Setting $t$ to one we deduce that
\begin{align*}
&  \left\vert f(Exp(\Gamma(w))(x))-\left(  \Gamma\left(  \sum_{i_{1}%
+\cdots+i_{j}=0}^{m}\frac{1}{j!}w_{i_{1}}\otimes\cdots\otimes w_{i_{j}%
}\right)  \right)  f(x)\right\vert \\
&  \leq\left\Vert \sum_{i_{2}+\cdots+i_{j}\leq m,\text{ }i_{1}+\cdots+i_{j}%
>m}\frac{1}{j!}\Gamma\left(  w_{i_{1}}\otimes\cdots\otimes w_{i_{j}}\right)
f\right\Vert _{\infty}\\
&  \leq\sum_{j=1}^{m}\left\Vert \Gamma\left(  (\pi_{2m}-\pi_{m})w^{\otimes
j}\right)  f\right\Vert _{\infty}%
\end{align*}
Noting that $\pi_{m}\exp\left(  w\right)  =\sum_{i_{1}+\cdots i_{j}=0}%
^{m}\frac{1}{j!}w_{i_{1}}\otimes\cdots\otimes w_{i_{j}}$ yields the first
claim. The second follows by considering $\langle\sqrt{s},w\rangle$ in place
of $w$ and noting that under the assumption $(w,\epsilon_{0})=0$, the vector
field $V_{0}$ does not appear on its own in the composition of the
differential operators on the right hand side of the last inequality.
\end{proof}

The last lemma is obtained by combining arguments from Lyons, Victoir
\cite{lyons} and Kusuoka \cite{kusuoka}.

\begin{lemma}
\label{l1} Let $t\in(0,1]$ and $Q_{t}$ be a cubature measure for Wiener space.
Then for any $x\in R^{N}$
\[
\Big|E_{Q_{t}}\left(  f(\xi_{t,x})\right)  -E_{Q_{1}}\left\{  f\Big(Exp\left[
\,\,\Gamma\,\pi_{m}\,\langle\sqrt{t},logS_{0,1}(\circ B)\rangle\,\right]
(x)\Big)\right\}  \Big|\leq Ct^{(m+1)/2}\Vert\nabla f\Vert_{\infty}%
\]
for all $f\in C_{b}^{\infty}(R^{N})$, where $C$ is a constant independent of
$t$ and $f$.
\end{lemma}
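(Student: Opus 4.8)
The plan is to run the argument behind Lemma \ref{k1} with the Wiener measure $P$ replaced by the cubature measure $Q_{t}$ throughout. Recall first that homogeneous scaling commutes with $\log$ and that $S_{0,t}(\omega_{t})=\langle\sqrt{t},S_{0,1}(\omega)\rangle$ for the rescaling $\omega_{t}$ of a cubature path $\omega$; hence the flow endpoint $Exp[\Gamma\,\pi_{m}\,\langle\sqrt{t},\log S_{0,1}(\circ B)\rangle](x)$ appearing under $Q_{1}$ in the statement is the same as $Exp[\Gamma\,\pi_{m}\,\log S_{0,t}(\circ B)](x)$ under $Q_{t}$. Since $|f(y)-f(z)|\leq\Vert\nabla f\Vert_{\infty}|y-z|$ for $y,z\in R^{N}$, the quantity to be estimated is at most $\Vert\nabla f\Vert_{\infty}$ times
\[
\sup_{x\in R^{N}}E_{Q_{t}}\Big|\,\xi_{t,x}-Exp\big[\Gamma\,\pi_{m}\,\log S_{0,t}(\circ B)\big](x)\,\Big|,
\]
so it suffices to bound this $L^{1}(Q_{t})$ quantity by $Ct^{(m+1)/2}$. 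This is why only $\Vert\nabla f\Vert_{\infty}$ enters and why everything reduces to the identity map $Id(x)=x$; note that although $Id$ is unbounded, each $V_{\alpha}Id$ with $|\alpha|\geq1$ is bounded because $V_{0},\ldots,V_{d}\in C_{b}^{\infty}(R^{N};R^{N})$, so Lemmas \ref{p1.1.1} and \ref{lem1.1} apply to $f=Id$.

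First I would expand the ODE endpoint. Along a bounded variation path the stochastic Taylor expansion (\ref{STremainder}) becomes an ordinary iterated Taylor expansion, so, since $\Gamma(\epsilon_{\alpha_{1}}\otimes\cdots\otimes\epsilon_{\alpha_{k}})=V_{\alpha_{1}}\cdots V_{\alpha_{k}}$, one gets $\xi_{t,x}(\eta)=\Gamma\big(\pi_{m}S_{0,t}(\eta)\big)Id(x)+R_{m}(t,x,Id)(\eta)$ for every path $\eta$ in the support of $Q_{t}$. By Lemma \ref{lem1.1} with $f=Id$, $\sup_{x}E_{Q_{t}}|R_{m}(t,x,Id)|\leq C\sum_{j=m+1}^{m+2}t^{j/2}\sup_{\alpha\in A(j)\setminus A(j-1)}\Vert V_{\alpha}Id\Vert_{\infty}$, which is at most $Ct^{(m+1)/2}$ on $(0,1]$.

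Second I would expand the flow endpoint. Put $w:=\pi_{m}\log S_{0,t}(\eta)\in\pi_{m}(\mathcal{L})$ and apply the first inequality of Lemma \ref{k3} with $f=Id$:
\[
\Big|\,Id\big(Exp[\Gamma(w)](x)\big)-\Gamma\big(\pi_{m}\exp(w)\big)Id(x)\,\Big|\leq\sum_{i=1}^{m}\big\Vert\Gamma\big((\pi_{2m}-\pi_{m})w^{\otimes i}\big)Id\big\Vert_{\infty}.
\]
Using the algebraic identity $\pi_{m}\exp(\pi_{m}a)=\pi_{m}\exp(a)$ we have $\pi_{m}\exp(w)=\pi_{m}\exp(\log S_{0,t}(\eta))=\pi_{m}S_{0,t}(\eta)$, so the leading term above is exactly the leading term of the Taylor expansion of $\xi_{t,x}(\eta)$. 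Subtracting,
\[
\big|\xi_{t,x}(\eta)-Exp[\Gamma(w)](x)\big|\leq|R_{m}(t,x,Id)(\eta)|+\sum_{i=1}^{m}\big\Vert\Gamma\big((\pi_{2m}-\pi_{m})w^{\otimes i}\big)Id\big\Vert_{\infty}.
\]

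Finally I would average against the cubature weights. The first term was estimated above. For the second, write $w=\sum_{k=1}^{m}t^{k/2}\ell_{k}$, where $\ell_{k}$ is the homogeneous degree-$k$ part of $\log S_{0,1}(\omega)$ (with $\eta=\omega_{t}$); a word of degree $\ell$ in $(\pi_{2m}-\pi_{m})w^{\otimes i}$ then carries the prefactor $t^{\ell/2}$ with $m+1\leq\ell\leq2m$, hence at most $t^{(m+1)/2}$ for $t\in(0,1]$, together with a coefficient equal to a product of the $\Vert\ell_{k}\Vert_{2}$ times a factor bounded by $\sup_{\Vert\beta\Vert\leq2m}\Vert\Gamma(\epsilon_{\beta})Id\Vert_{\infty}<\infty$. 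Averaging over the weights of $Q_{1}$ and using H\"older's inequality, each such expectation is controlled by the moments $E_{Q_{1}}\Vert\pi_{k}(\log S(\circ B))\Vert_{2}^{k}$, $1\leq k\leq2m$, which are finite, giving a constant of the type listed in Theorem \ref{mainthm}. Collecting the estimates yields $\sup_{x}E_{Q_{t}}|\xi_{t,x}-Exp[\Gamma\,\pi_{m}\,\log S_{0,t}(\circ B)](x)|\leq Ct^{(m+1)/2}$, and multiplying by $\Vert\nabla f\Vert_{\infty}$ proves the claim. The step requiring the most care is this last one: tracking the homogeneity powers through the rescaling and checking that the $Q_{t}$-average of the Lemma \ref{k3} correction is genuinely $O(t^{(m+1)/2})$ with a constant depending only on the stated data; the rest is a faithful copy of the proof of Lemma \ref{k1}.
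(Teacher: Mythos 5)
Your proposal is correct and follows essentially the same route as the paper: reduce to an $L^{1}(Q_{t})$ estimate on $\xi_{t,x}-Exp[\Gamma\,\pi_{m}\,\log S_{0,t}(\circ B)](x)$ via the Lipschitz bound, then control it with Lemma \ref{lem1.1} and the first part of Lemma \ref{k3} both applied to $g=Id$ and a triangle inequality. You are in fact slightly more explicit than the paper on two points it leaves implicit, namely the identification $E_{Q_{1}}[\,\cdot\,\langle\sqrt{t},\cdot\rangle]=E_{Q_{t}}[\,\cdot\,]$ via homogeneous rescaling of the cubature paths, and the observation that $V_{\alpha}Id$ is bounded even though $Id$ is not.
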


\begin{proof}
Let $g\in C_{b}^{\infty}(R^{N})$. By Lemma \ref{lem1.1} we may write
\begin{equation}
E_{Q_{t}}\left\vert g(\xi_{t,x})-\Gamma\big(\pi_{m}exp(\log S_{0,t}(\circ
B))\big)g(x)\right\vert \leq C\sum_{j=m+1}^{m+2}t^{j/2}\sup_{\alpha\in
A(j)\setminus A(j-1)}\Vert V_{\alpha}g\Vert_{\infty}. \label{lem:eq1a}%
\end{equation}
Letting $w=\pi_{m}\log S_{0,t}(\circ B)$ and applying Lemma \ref{k3} we see
that
\begin{equation}
E_{Q_{t}}\left\vert g(Exp[\Gamma w](x)-\Gamma\big(\pi_{m}%
exp(w)\big)g(x)\right\vert \leq C\sum_{j=m}^{2m}t^{j/2}\sup_{\alpha\in
A(j)\setminus A(j-1)}\Vert V_{\alpha}g\Vert_{\infty} \label{lem:eq2}%
\end{equation}
Combining (\ref{lem:eq1a}) and (\ref{lem:eq2}) with $g$ the identity function
we see that
\[
E_{Q_{t}}\Big|\xi_{t,x}-Exp\left[  \,\,\Gamma\,\pi_{m}\,logS_{0,t}(\circ
B)\,\right]  (x)\Big|\leq C\sum_{j=m}^{2m}t^{j/2}\sup_{\alpha\in A(j)\setminus
A(j-1)}\Vert V_{\alpha}Id\Vert_{\infty}%
\]
and the lemma follows.
\end{proof}

We are now ready to prove Theorem \ref{mainthm}. Our proof is modelled along
Kusuoka \cite{kusuoka} Lemma 18. We will go through a sequence of
approximations for the expectation of $f(\xi_{s,x})$ under the Wiener measure
and the cubature measure. Finally we show that the approximations at each end agree.

\begin{proof}
{(Theorem \ref{mainthm})}

Let $\mu_{1,s}=P$ be the Wiener measure on paths parametrised over $[0,s]$ and
$\mu_{2,s} = Q_{s}$. From Lemmas \ref{k1} and \ref{l1} we see that for
$i=1,2$
\begin{equation}
\label{eq2}\sup_{x \in R^{N}} \Big| E_{\mu_{i,s}}\left(  f(\xi_{s,x})\right)
- E_{\mu_{i,1}} \left\{  f \Big( Exp \left[  \, \, \Gamma\, \pi_{m} \,
\langle\sqrt{s}, \log S_{0,1}(\circ B) \rangle\, \right]  (x) \Big) \right\}
\Big| \leq C s^{(m+1)/2} \| \nabla f \|_{\infty}%
\end{equation}

Let
\[
L^{(1)}=\pi_{m}\log S_{0,1}^{(m)}(\circ B)\quad and\quad L^{(2)}=-\epsilon
_{0}.
\]
It is well known (see e.g. Lyons \cite{lyons2}) that the log-signature of
Brownian motion is a Lie series with probability one. Also we have $E[\Vert
\pi_{k}(logS_{0,1}(\circ B))\Vert_{2}^{k}]<\infty$, in fact using the
techniques of rough paths a similar statement can be obtained at the level of
paths. For example Lyons and Sidorova compute in \cite{sidorova} the radius of
convergence for the log signature. \newline

Hence, Lemma \ref{k2} implies that for $i =1,2$
\begin{equation}
\label{eq1a}\bigg| E_{\mu_{i,1}} f \Big( Exp(\Gamma\pi_{m} \langle\sqrt
{s},L^{(1)} \rangle) \left(  Exp\left[  \Gamma\pi_{m} \langle\sqrt{s},L^{(2)}
\rangle\right]  \,(z)\,\right)  \Big)
\end{equation}
\[
- E_{\mu_{i,1}} f \Big( Exp\left[  \Gamma\pi_{m} log\left(  exp \langle
\sqrt{s},L^{(2)}\rangle\, exp \langle\sqrt{s},L^{(1)}\rangle\right)  \right]
(z) \Big) \bigg| \leq C s^{(m+1)/2} \| \nabla f \|_{\infty}.
\]
Writing
\[
x=Exp\left(  \Gamma\langle\sqrt{s}, -\epsilon_{0}\rangle\right)  (z)
\]
the inequality (\ref{eq1a}) becomes
\begin{multline}
\label{eq1}\bigg| E_{\mu_{i,1}} f \Big( Exp(\Gamma\pi_{m} \langle\sqrt
{s},L^{(1)} \rangle) (x) \Big)\\
- E_{\mu_{i,1}} f \Big( Exp\left[  \Gamma\pi_{m} log\left(  exp \langle
\sqrt{s},L^{(2)}\rangle\, exp \langle\sqrt{s},L^{(1)}\rangle\right)  \right]
(Exp\left(  \Gamma\langle\sqrt{s}, \epsilon_{0}\rangle\right)  (x))
\Big) \bigg|\\
\leq C s^{(m+1)/2} \| \nabla f \|_{\infty}.
\end{multline}

Thus, combining inequalities (\ref{eq2}) and (\ref{eq1}) and using the
triangle inequality we see that
\begin{multline}
\sup_{x\in R^{N}}\Big|E_{\mu_{i,s}}f(\xi_{s,x})\label{eq3}\\
-E_{\mu_{i,1}}f\Big(Exp\left[  \Gamma\pi_{m}log\Big(exp(\langle\sqrt
{s},-\epsilon_{0}\rangle)\,\langle\sqrt{s},S_{0,1}(\circ B)\rangle
\Big)\right]  (Exp\left(  \Gamma\langle\sqrt{s},\epsilon_{0}\rangle\right)
(x))\Big)\bigg|\\
\leq Cs^{(m+1)/2}\Vert\nabla f\Vert_{\infty}.
\end{multline}

It follows from the Baker-Campbell-Hausdorff formula that
\begin{equation}
\pi_{m}log\Big(exp(\langle\sqrt{s},-\epsilon_{0}\rangle)\,\langle\sqrt
{s},S_{0,1}(\circ B)\rangle\Big) \label{eq3a}%
\end{equation}
has no $\epsilon_{0}$ component, i.e.
\[
\left(  \pi_{m}log\Big(exp(\langle\sqrt{s},-\epsilon_{0}\rangle)\,\langle
\sqrt{s},S_{0,1}(\circ B)\rangle\Big),\epsilon_{0}\right)  =0,
\]
where we defined the inner product in (\ref{inner-product}).\newline Moreover
as the log signature of the Brownian motion is a Lie series with probability
one, $\left(  \ref{eq3a}\right)  $ is a Lie polynomial.

Hence, we may apply Lemma \ref{k3} to inequality (\ref{eq3}) and once again
using the triangle inequality we obtain for $i=1,2$
\begin{equation}
\sup_{x\in R^{N}}\Big|E_{\mu_{i,s}}\left(  f(\xi_{s,x})\right)  -E_{\mu_{i,1}%
}\left(  \big(\Gamma\,\pi_{m}\,exp\left\langle \,\sqrt{s},\pi_{m}%
log\big(exp(-\epsilon_{0})\,S_{0,1}(\circ B)\big)\,\right\rangle
f\big)(y)\right)  \Big| \label{eq4}%
\end{equation}%
\[
\leq C\left(  \sum_{j=m+1}^{2m}s^{j/2}\Vert f\Vert_{V,j}+s^{(m+1)/2}%
\Vert\nabla f\Vert_{\infty}\right)  ,
\]
where $y=Exp\left(  \Gamma\langle\sqrt{s},\epsilon_{0}\rangle\right)  (x)$.
Note that in the previous step we have used the fact that the scaling
operation $\langle s,\cdot\rangle$ commutes with log and exp. We have also
used the fact that
\[
\pi_{m}\,exp\left\langle \,\sqrt{s},\pi_{m}log\big(exp(-\epsilon_{0}%
)\,S_{0,1}(\circ B))\right\rangle =\pi_{m}\langle\sqrt{s},exp(-\epsilon
_{0})S_{0,1}(\circ B)\rangle.
\]

Finally using the cubature relation (\ref{cubature})
\[
E_{P}\left(  \pi_{m}\,S_{0,1}(\circ B)\right)  =E_{Q_{1}}(\pi_{m}%
\,S_{0,1}(\circ B)))
\]
and noting that the multiplication by a deterministic tensor can be taken out
of the expectation we have
\[
E_{P}\Big(\pi_{m}(exp(-\epsilon_{0})S_{0,1}(\circ B))\Big)=E_{Q_{1}}%
\Big(\pi_{m}(exp(-\epsilon_{0})S_{0,1}(\circ B))\Big).
\]
Hence, it follows that
\[
E_{P}\left[  \big(\Gamma\,\pi_{m}\langle\sqrt{s},exp(-\epsilon_{0}%
)S_{0,1}(\circ B)\rangle f\big)(y)\right]  =E_{Q_{1}}\left[  \big(\Gamma
\,\pi_{m}\langle\sqrt{s},exp(-\epsilon_{0})S_{0,1}(\circ B)\rangle
f\big)(y)\right]  .
\]
Using this identity in $\left(  \ref{eq4}\right)  $ a final application of the
triangle inequality completes the proof of the theorem.
\end{proof}

\begin{remark}
\label{klv-kusuoka} The truncated log signatures of the cubature paths of a
degree $m$ cubature measure satisfy the definition of a $m$-$\mathcal{L}%
$-moment similar random variable of Kusuoka \cite{kusuoka} with respect to the
truncated log signature of the Brownian motion. Conversely for any finite such
family we can find paths that satisfy a degree $m$ cubature formula. The
approximation operator for $P_{s}f$ whose error bounds are analysed in
\cite{kusuoka} can be written as
\[
E_{Q_{\mathcal{L}}}f\left(  Exp[\Gamma\langle\sqrt{s},\pi_{m}L\rangle
](x)\right)
\]
and it is clear from our discussion that the same bounds as in Theorem
\ref{mainthm} can be obtained for this approximation.
\end{remark}

\end{document}